\documentclass[11pt]{amsart}
\usepackage{amsmath,amsthm,amsfonts,amssymb,latexsym,enumerate,graphicx,mathrsfs,mathabx,cancel}
\usepackage[utf8]{inputenc}
\usepackage[letterpaper]{geometry}
\geometry{height=23cm}
\usepackage{hyperref}
\hypersetup{colorlinks, citecolor=blue, linkcolor=red}
\usepackage{stmaryrd}
\usepackage{color}
\usepackage{tikz}
\usepackage{tikz-cd}
\usepackage[normalem]{ulem}
\usepackage{adjustbox}


\newtheorem{thm}{Theorem}[section]
\newtheorem{cor}[thm]{Corollary}
\newtheorem{prop}[thm]{Proposition} 
\newtheorem{lem}[thm]{Lemma}

\newtheorem{mainthm}{Theorem}

\theoremstyle{definition}
\newtheorem{defn}[thm]{Definition}

\newtheorem{question}[thm]{Question}

\theoremstyle{remark}
\newtheorem{remark}[thm]{Remark}

\makeatletter
\@tfor\next:=abcdefghijklmnopqrstuvwxyzABCDEFGHIJKLMNOPQRSTUVWXYZ\do{%
	\def\command@factory#1{%
		\expandafter\def\csname cal#1\endcsname{\mathcal{#1}}
		\expandafter\def\csname frak#1\endcsname{\mathfrak{#1}}
		\expandafter\def\csname scr#1\endcsname{\mathscr{#1}}
		\expandafter\def\csname bb#1\endcsname{\mathbb{#1}}
		\expandafter\def\csname rm#1\endcsname{\mathrm{#1}}
		\expandafter\def\csname bf#1\endcsname{\mathbf{#1}}
	}
	\expandafter\command@factory\next
}
\makeatother

\makeatletter
\DeclareFontEncoding{LS1}{}{}
\DeclareFontSubstitution{LS1}{stix}{m}{n}
\DeclareMathAlphabet{\mathscr}{LS1}{stixscr}{m}{n}
\makeatother


\newcommand{\define}{\emph}

\DeclareMathOperator{\aut}{\mathrm{Aut}}
\DeclareMathOperator{\out}{\mathrm{Out}}

\title[On the proofs of Leighton's Graph Covering Theorem]{On the proofs of Leighton's Graph Covering Theorem, a notion dual to commensurability, and normal virtual retracts}
\author[Nicholas Touikan]{Nicholas Touikan \\ with an appendix by Ashot Minasyan}

\begin{document}

\begin{abstract}
    Leighton's Graph Covering Theorem states that if two finite graphs have the same universal covering tree, then they also have a common finite degree cover. Bass and Kulkarni gave an alternative proof of this fact using tree lattices. We give an example of two graphs that admit a common finite cover which can not be obtained using tree lattice techniques. If two groups embed as finite index subgroups, we say they are co-commensurable. Our example comes from an explicit commensuration that cannot be induced by a co-commensuration. Next we state and prove a general theorem that gives necessary and sufficient conditions for when a commensuration can be induced by a co-commensuration. The developed machinery is then used to show that normal virtual retracts are virtual direct summands, answering a question of Merladet and Minasyan. In an appendix, applications to commensurating graphs of groups, biautomaticity, and hereditary conjugacy separability are given.
  \end{abstract}
\maketitle
\section{Introduction}

Leighton's Graph Covering Theorem \cite{leighton_finite_1982} asserts that if two finite graphs have the same universal covering tree then they admit a finite common covering space, answering a question posed in \cite{angluin_finite_1981}. Since, there have been various extensions of this result. There have been three proof strategies: constructing a finite cover by solving an integer programming problem (for example \cite{leighton_finite_1982,woodhouse_revisiting_2021}), using tree lattice techniques (for example \cite{bass_uniform_1990}), or using groupoids (for example \cite{shepherd_two_2022}). The theorem has also been generalized to decorated graphs and cube complexes \cite{woodhouse_revisiting_2021,shepherd_two_2022,woodhouse_leightons_2023,shepherd_commensurability_2024}. In \cite{neumann_leightons_2010} and \cite{shepherd_two_2022} different strategies are compared.

There have also been negative results giving the limitation to generalization such as \cite{bridson_leightons_2022} that states, for example, that if two finite graphs are covered by a common regular quasitree, this pair of coverings may not factor through a finite graph. Other ``non-Leighton'' examples have been constructed using non-positively curved square complexes \cite{wise_non-positively_1996,burger_lattices_2000} and there has also been work in finding minimal examples \cite{janzen_smallest_2009,dergacheva_small_2023,dergacheva_tiny_2025}. This paper is concerned with a different question:
\begin{question}\label{qu:one}
Given two finite graphs $X_1,X_2$ with the same universal covers, does the the tree-lattice approach in \cite{bass_uniform_1990} give all possible common finite covers of $X_1$ and $X_2$?  
\end{question}

We will now describe our approach to resolving this question, while also giving previously known results. Two groups $G_1,G_2$ are \define{commensurable} if they have a common finite index subgroup (up to isomorphism). In particular, any pair of topological spaces (that admit well-defined fundamental groups) that have a common finite degree covering space will have commensurable fundamental groups.

One way to show that two groups are commensurable is to show that they both embed as finite index subgroups in a common overgroup. This second property is dual to commensurability and we call it \define{co-commensurability}. This is the basis of the approach to Leighton's Theorem in \cite{bass_uniform_1990}: if $X_1,X_2$ are finite graphs with the common universal cover $T$ then actions by deck transformations of $\Gamma_1=\pi_1(X_1)$ and $\Gamma_2=\pi_1(X_2)$ on $T$ give embeddings $\Gamma_1,\Gamma_2 \leq \mathrm{Isom}(T)$. While there is no reason a-priori to expect $\Gamma_1,\Gamma_2$ to have non-trivial intersection in $\mathrm{Isom}(T)$, it is shown \cite[Theorem 4.7]{bass_uniform_1990} that there is a discrete subgroup $\Phi\leq\mathrm{Isom}(T)$ and $g_1,g_2\in\mathrm{Isom}(T)$ such that $[\Phi:\Gamma_i^{g_i}]<\infty, i=1,2$. This in turn implies $[\Gamma_i^{g_i}:\Gamma_1^{g_1}\cap\Gamma_2^{g_2}] <\infty, i=1,2$. Thus, any two such $\Gamma_1,\Gamma_2$ are co-commensurable and a common finite cover is obtained by taking the quotient $T/(\Gamma_1^{g_1}\cap\Gamma_2^{g_2})$. In \cite{neumann_leightons_2010}, so-called fat graphs are used to give discrete common finite index overgroups.

Co-commensurability easily implies commensurability and it is natural to ask whether the converse is true. It turns out that there are commensurable groups that cannot be embedded as finite index subgroups of a common overgroup.

The most well-known examples are non-conjugate maximal uniform lattices $\Gamma_1,\Gamma_2 \leq \mathrm{Isom}(\bbH^n)$, i.e. fundamental groups of hyperbolic orbifolds that cannot properly cover other orbifolds, that are commensurable in $\mathrm{Isom}(\bbH^n)$, i.e. $[\Gamma_i:\Gamma_1\cap\Gamma_2] <\infty, i=1,2$, but, by definition of maximal, cannot be both be contained in a common finite index discrete subgroup of $\mathrm{Isom}(\bbH^n)$. In other words, for any subgroup $\Gamma_1,\Gamma_2\leq H \leq \mathrm{Isom}(\bbH^n)$ we have $[H:\Gamma_i]=\infty$. This phenomenon can occur for so-called arithmetic lattices (see \cite{margulis_discrete_1991,maclachlan_arithmetic_2003}).

Now if we were able to find an ``abstract'' overgroup $\Gamma_1,\Gamma_2 \leq K$ that contained $\Gamma_1,\Gamma_2$ as finite index overgroups, then a result of Tukia \cite{tukia_quasiconformal_1986,tukia_convergence_1994} and Mostow rigidity \cite{mostow_quasi-conformal_1968} (see also \cite[Chapters 23-24]{drutu_geometric_2018} for a contemporary and unified account) imply that $K$ can actually be embedded (modulo a finite kernel) as a subgroup of $\mathrm{Isom}(\bbH^n)$ (in fact a uniform lattice) containing $\Gamma_1,\Gamma_2$ as proper finite index subgroups, contradicting maximality of $\Gamma_1,\Gamma_2$ as uniform lattices.

Commensurability therefore does not imply co-commensurability. Examples of groups that are commensurable but not co-commensurable show that the co-commensurability relation is not transitive (any group is trivially co-commensurable with its finite index subgroups).

While the question of (co-) commensurability within the class of free groups is easily settled (all finite rank nonabelian free groups embed as finite index subgroups of $F_2$), the more subtle question of whether a specific embedding of a group $H$ as a finite index subgroup of two free groups $F_1,F_2$ can be induced by an embedding into a common overgroup $K$, i.e. where we have a commutative diagram of inclusions as finite index subgroups
\[
    \begin{tikzcd}
        &H \arrow[dl,phantom,sloped,"\geq"] \arrow[dr,phantom,sloped,"\leq"]&\\
        F_1 \arrow[dr,phantom,sloped,"\leq"]& &F_2 \arrow[dl,phantom,sloped,"\geq"]\\
        &K&
    \end{tikzcd},
  \]
  was still unknown. It turns out that a necessary condition is that $H$ contains a normal subgroup $N$ that is itself simultaneously normal in both $F_1$ and $F_2$. This property in turn implies that the associated amalgamated free product $F_1*_HF_2$ admits a virtually free quotient and therefore also a finite quotient (see Lemma \ref{lem:coco_implies_finite_quotients}). The amalgamated free products of free groups constructed in \cite{bhattacharjee_constructing_1994}  give examples of free groups that can not be induced by a co-commensuration (see Corollary \ref{cor:incompletable1}.) This example still doesn't settle Question \ref{qu:one}. Using the more refined constructions in \cite{rattaggi_three_2007}, we are able to show that there is a finite graph $Z$ that covers graphs $X_1$ and $X_2$ such that the induced commensuration cannot be induced by a co-commensuration (see Theorem \ref{thm:no-BK}) thus:

  \begin{quote}
    \emph{The answer to Question \ref{qu:one} is ``no''.}
  \end{quote}

Question \ref{qu:one} was motivated by the author's attempts to generalize the approach in \cite{bass_uniform_1990} to other contexts in order to create common finite covers. Being able to do so has been a key step in recent quasi-isometric rigidity results of graphs of groups such as \cite{shepherd_quasi-isometric_2022,margolis_graphically_2023}. For example, although the preprint \cite{taam_quasi-isometric_2023} was withdrawn due to critical gap in the main argument, the authors (apparently correctly) applied the method of \cite{bass_uniform_1990} to show that if two groups $\Gamma_1,\Gamma_2$ acted freely, combinatorially, and cocompactly on a so-called churro-waffle space $X$ (i.e. they are uniform lattices in $\mathrm{Aut}(X)$) then can both be virtually embedded as finite index subgroups of a group $\Delta$. On the other hand, while Leighton's Theorem for ``graphs with fins'' was proved in \cite{woodhouse_revisiting_2021} using a linear programming and Haar measure approach, attempts to apply the method of \cite{bass_uniform_1990} have been unsuccessful to date. This negative answer to Question \ref{qu:one} gives some indication as to why this is the case.

To answer Question \ref{qu:one}, we rely on a simple condition to exclude co-commensuration and at this point it is natural to ask if this necessary condition, i.e. having a simultaneously finite index normal subgroup, is sufficient to construct a co-commensuration. It turns out there is another less obvious but nonetheless easy and natural condition we call \emph{out-finiteness} that is an additional necessary condition (see Proposition \ref{prop:condition2}). We then show that this additional condition is actually sufficient to construct a co-commensuration (see Theorem \ref{thm:converse}), which immediately gives:

\begin{mainthm}\label{thm:main}
  A commensuration between groups $G_1$ and $G_2$ over a group $H$ is induced by a co-commensuration if and only if $H$ contains a subgroup $N$ that embeds in both  $G_1$ and $G_2$ as a normal subgroup and such that corresponding commensuration over $N$ is out-finite.
\end{mainthm}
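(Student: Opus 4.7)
The proof plan has two directions, corresponding to the two Propositions/Theorems already cited in the excerpt. The forward implication follows Proposition \ref{prop:condition2} (the necessity of out-finiteness and of the existence of a simultaneously normal subgroup), while the reverse direction requires the constructive content of Theorem \ref{thm:converse}. Throughout, I will work with the amalgamated free product $P = G_1 *_H G_2$, which is the natural receptacle since both commensurations $H \hookrightarrow G_i$ factor into it.

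For the forward direction, suppose the commensuration is induced by a co-commensuration, so that $G_1, G_2$ embed as finite-index subgroups of some overgroup $K$ in a way compatible with the inclusions of $H$. The plan is to let $N$ be the normal core of $H$ in $K$, namely $N = \bigcap_{k\in K} kHk^{-1}$. Since $[K : H] < \infty$, the normal core has finite index in $K$, hence in each $G_i$, and by construction $N \leq H$ is normal in $K$ and therefore in both $G_i$. For out-finiteness, the quotient $K/N$ is finite, and each of the conjugation maps $G_i/N \to \out(N)$ factors through the common map $K/N \to \out(N)$; hence the subgroup of $\out(N)$ generated by the images of $G_1/N$ and $G_2/N$ is a homomorphic image of a subgroup of the finite group $K/N$, verifying out-finiteness.

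For the reverse direction, assume $N \leq H$ is normal in both $G_i$ with $[G_i : N] < \infty$, and the induced commensuration over $N$ is out-finite. The plan is to realize $K$ as a carefully chosen finite-index overgroup of a faithful image of $P = G_1 *_H G_2$. Since $N$ is normal in each $G_i$ and $P$ is generated by $G_1 \cup G_2$, the subgroup $N$ is normal in $P$ and the quotient is the amalgamated free product of finite groups $\bar{P} = (G_1/N) *_{H/N} (G_2/N)$, which is virtually free and hence residually finite. Conjugation defines $\bar\phi\colon \bar{P} \to \out(N)$, and by hypothesis its image $\bar Q$ is finite. I would then use $\bar\phi$ together with residual finiteness of $\bar{P}$ to produce a finite quotient $\bar{P} \twoheadrightarrow \bar{K}$ through which $\bar\phi$ factors and on which each $G_i/N$ and the amalgamated subgroup $H/N$ all inject. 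The target group $K$ is built as a group extension $1 \to N \to K \to \bar K \to 1$, whose outer action $\bar K \to \out(N)$ is the one induced by $\bar\phi$, and whose cocycle class is pieced together from the existing extension classes of $1 \to N \to G_i \to G_i/N \to 1$ and their agreement on $1\to N \to H \to H/N \to 1$.

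The main obstacle will be the last step: lifting the outer action $\bar K \to \out(N)$ to an honest extension $K$ of $\bar K$ by $N$ in such a way that the two given extensions $G_i$ embed and agree on $H$. Abstractly this is a gluing problem in the Eilenberg--MacLane classification of extensions: the cohomology classes in $H^2(G_i/N; Z(N))$ corresponding to $G_i$ must simultaneously restrict, under compatible pullbacks along $\bar K \twoheadrightarrow G_i/N$, to a single class in $H^2(\bar K; Z(N))$ whose further restriction to $H/N$ agrees with that of $H$. The out-finiteness hypothesis is exactly what renders the relevant outer actions compatible and the target cohomology groups finite, allowing the required gluing to be carried out; verifying this compatibility and producing an explicit cocycle representative is the core technical work. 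Once $K$ is constructed, checking that $[K : G_i] = [\bar K : G_i/N] < \infty$ and that the two embeddings of $H$ coincide follows formally from the construction via the amalgam $\bar P$.
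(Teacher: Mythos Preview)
Your forward direction (normal core of $H$ in $K$, then finiteness of $K/N$) is correct and matches the paper.

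For the reverse direction, you and the paper both start from $P=G_1*_HG_2$ and its virtually free quotient $\bar P=(G_1/N)*_{H/N}(G_2/N)$, but your proposed endgame has a genuine gap. Building $K$ as an extension of a finite $\bar K$ by $N$ amounts to finding a finite-index normal $L\lhd\bar P$ (with $G_i/N$, $H/N$ injecting into $\bar P/L$) such that the extension class of $P$ in $H^2(\bar P;Z(N))$ is inflated from $\bar K=\bar P/L$; equivalently, $L$ must lift to a \emph{normal} subgroup $K_\Gamma\lhd P$ with $K_\Gamma\cap N=\{1\}$. Since $L$ can be taken free, a lift $K'$ with $K'\cap N=\{1\}$ always exists, but making that lift normal in $P$ is the entire difficulty, and neither residual finiteness of $\bar P$ nor out-finiteness alone delivers it. Your assertion that out-finiteness ``renders the target cohomology groups finite'' is incorrect in general ($Z(N)$ may be infinite, even not finitely generated), and in any case finiteness of $H^2(\bar K;Z(N))$ would not by itself produce a class restricting to both given ones. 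Also note that $G_i/N\hookrightarrow\bar K$ are injections, so the direction of your ``pullbacks along $\bar K\twoheadrightarrow G_i/N$'' is reversed.

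The paper supplies precisely the missing mechanism and does not use extension cohomology. After arranging $\pi^{-1}(K)=N\times K'$ (out-finiteness plus freeness), it observes that the failure of normality of $K'$ in $P$ is recorded in the $\bbZ\Gamma$-module $M=Z(N)\times K'_{ab}$, where $\Gamma$ is the finite image of $P$ in $\out(NK')$. An averaged projector $\rho(v)=\sum_{\gamma\in\Gamma}\gamma\cdot\rho_0(\gamma^{-1}\cdot v)$ onto $Z(N)$ yields a $\Gamma$-invariant kernel; two further passages to finite-index characteristic subgroups of $K'$ correct for the fact that $\rho$ is only multiplication by $|\Gamma|$ on $Z(N)$ and for possible torsion in $Z(N)$. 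The resulting $K_\Gamma\leq K'$ is normal in $P$ with $K_\Gamma\cap N=\{1\}$, and $K=P/K_\Gamma$ is the completion. Your cohomological framing is compatible with this outcome, but the proposal contains no mechanism to perform this step.
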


Out-finiteness, which is defined in Section \ref{sec:nec-suff}, is immediately satisfied if the group $N$ in the statement of Theorem \ref{thm:main} has finite outer automorphisms group. Thus, for example, if $\Gamma$ is a one-ended word hyperbolic group, by combining \cite[Theorem 1.4]{levitt_automorphisms_2005} and the main result of \cite{bowditch_cut_1998}, we have if the Gromov boundary of $\Gamma$ has no cutpairs, i.e. pairs of points whose removal disconnect, then $\out(\Gamma)$ must be finite and Theorem \ref{thm:main} immediately gives.

\begin{cor}\label{cor:com-hyp}
  If $\Gamma_1,\Gamma_2$ are word hyperbolic groups with connected and cutpair-free Gromov boundaries, then $\Gamma_1,\Gamma_2$ can be embedded as finite index subgroups of a common overgroup if and only if they have a common finite index normal subgroup.
\end{cor}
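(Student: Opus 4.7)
The plan is to deduce both directions of the corollary directly from Theorem \ref{thm:main} together with the rigidity results cited immediately before its statement.

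For the ``only if'' direction, suppose $\Gamma_1, \Gamma_2$ embed as finite index subgroups of a common overgroup $K$. I would take $H := \Gamma_1 \cap \Gamma_2$, which has finite index in $K$, and pass to its normal core
\[
  N := \bigcap_{k \in K} k H k^{-1}.
\]
Standard counting gives $[K:N] \leq [K:H]!$, so $N$ has finite index in $K$ and hence in each $\Gamma_i$, and since $N$ is normal in $K$ it is \emph{a fortiori} normal in each $\Gamma_i$. This produces the required common finite index normal subgroup, and no hyperbolicity is used here.

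For the ``if'' direction, let $N$ be a common finite index normal subgroup of $\Gamma_1$ and $\Gamma_2$. My plan is to apply Theorem \ref{thm:main} with $H = N$. The only nontrivial thing to check is out-finiteness of the resulting commensuration, which by the remark preceding the corollary reduces to showing that $\mathrm{Out}(N)$ is finite. Since $N$ has finite index in each word hyperbolic group $\Gamma_i$, it is itself word hyperbolic and quasi-isometric to $\Gamma_i$; in particular $\partial N$ is homeomorphic to $\partial \Gamma_i$, hence connected and cutpair-free. The combination of \cite[Theorem 1.4]{levitt_automorphisms_2005} with the main result of \cite{bowditch_cut_1998} then forces $\mathrm{Out}(N)$ to be finite, and Theorem \ref{thm:main} delivers the common finite index overgroup.

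The only real obstacle is transporting the boundary hypotheses from the ambient groups $\Gamma_i$ to the possibly smaller common normal subgroup $N$. Quasi-isometry invariance of the Gromov boundary, together with the fact that hyperbolicity passes to finite index subgroups, resolves this at once, so the corollary is essentially a packaging of Theorem \ref{thm:main} with the cited geometric input; beyond invoking these results the argument takes only a few lines.
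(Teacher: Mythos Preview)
Your argument is correct and matches the paper's own derivation: the ``only if'' direction is the normal-core argument of Proposition~\ref{prop:cocom_to_com}, and the ``if'' direction is exactly the intended deduction from Theorem~\ref{thm:main}, using quasi-isometry invariance of $\partial N$ to transport the connectedness and cutpair-free hypotheses to $N$ and then invoking \cite{levitt_automorphisms_2005,bowditch_cut_1998} to get $\out(N)$ finite. The paper does not spell this out further than the paragraph preceding the corollary, so your write-up is in fact more detailed than what appears there.
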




We will also use our method to show that the non-trivial semidirect products $\bbZ^2\rtimes D_4$ and $\bbZ^2\rtimes D_6$ cannot be embedded into a common finite index over group (see Proposition \ref{prop:virt-z2-not-coco}.) The argument we give is elementary.

Here is an additional application of the techniques in this paper communicated by Ashot Minasyan. A subgroup $H \leq G$ is said to be a \emph{virtual retract} if there exists a finite index subgroup $K\leq G$ containing $H$ such that there is a map $\rho: K \twoheadrightarrow H$ that restricts to the identity on $H$. We have the following unexpected result.

\begin{mainthm}[{Normal virtual retracts have normal virtual complements (see \cite[Question 4.9]{minasyan_virtual_2025})}]\label{thm:normal_virt_comp}
    Let $G$ be a group with a normal  subgroup $N\lhd G$  such that $N$ is a virtual retract of $G$ and $G/N$ is finitely generated. Then for every finite index subgroup $H \leqslant G$ containing $N$
    there exists a finitely generated normal subgroup $M \lhd G$ such that $M \subseteq H$, $M\cap N=\{1\}$ and $|G:MN|<\infty$. In particular, $M$  is a normal virtual complement to $N$ in $G$, and the mapping $M\times N \to MN$ given by $(m,n)\mapsto mn$ is an isomorphism.
\end{mainthm}

Observe that in Theorem \ref{thm:normal_virt_comp}, $N \cong MN/M$ embeds as a finite index normal subgroup of $G/M$ and we have a natural injective homomorphism 
\[G \hookrightarrow G/N \times G/M.\] The image of $G$ under this homomorphism is subdirect (i.e., it projects onto each factor) and has finite index (see \cite[Lemma~2.1]{Min-cs_of_fibre_prods}). From this we deduce the following.

\begin{cor}\label{cor:virt-retr-subnormal}
    Let $N\lhd G$ be a normal virtual retract of a group $G$ such that $G/N$ is finitely generated. Then $G$ embeds as a finite index subdirect product in $\widetilde{N} \times G/N$, where $\widetilde{N}$ is a finite index supergroup of $N$.
\end{cor}

This paper is essentially self-contained. After all, Theorem \ref{thm:main} is a theorem about general infinite groups, so one should not expect any specialized machinery. Although many arguments are informed by Bass-Serre theory, we only use the amalgamated free products that are naturally prescribed by a commensuration. The proof of Theorem \ref{thm:converse}, which involves arbitrary abelian groups, also uses elementary $\bbZ$-module theory.

\subsubsection*{Acknowledgments}
The author wishes to thank Alex Taam, Sam Shepherd and MathOverflow user HJRW (Henry Wilton) for useful discussions. The author also wishes to thank Adrien Le Boudec for diplomatically pointing out that one of the applications in a previous version of the paper was trivial, Ignat Soroko for providing useful feedback, and is extra grateful to Ashot Minasyan for, among other things, providing Theorem \ref{thm:normal_virt_comp}, discussing further applications of the techniques of this paper, for giving excellent suggestions to clarify the expostion, and for writing the appendix. The author is supported by an NSERC Discovery Grant.

Ashot Minasyan would like to thank the Isaac Newton Institute for Mathematical Sciences, Cambridge, for support and hospitality during the programme ``Actions on graphs and metric spaces'', where his work on the appendix to this paper was undertaken. This work was supported by EPSRC grant EP/Z000580/1.

\section{(co)commensuration}\label{sec:co-comm}

A \define{commensuration} is a pair of monomorphisms $(i_1,i_2)$ that have a common domain and whose images are finite index subgroups of their respective codomains. A commensuration $(i_1,i_2)$ is \define{trivial} if one of the monomorphisms is an isomorphism and \define{non-trivial} otherwise.  When we want to make the common domain of a commensuration explicit we will say that $(i_1,i_2)$ is a commensuration \define{over} $\mathrm{Dom}(i_1)=\mathrm{Dom}(i_2)$ and if we want to make the codomains explicit we will say it is a commensuration \define{between} $\mathrm{coDom}(i_1)$ and $\mathrm{coDom}(i_2)$. A \define{dual commensuration} or a \define{co-commensuration} is a pair of monomorphisms $(j_1,j_2)$ that have a common codomain and whose images are finite index subgroups of this common codomain. When we want to make the common codomain of a commensuration explicit we will say that $(j_1,j_2)$ is a co-commensuration \define{into} $\mathrm{coDom}(i_1)=\mathrm{coDom}(i_2)$.

Groups $G_1,G_2$ are said to be \define{commensurable} if there exists a commensuration ($i_1,i_2)$ between $G_1,G_2$, i.e. we have \begin{equation}\label{eqn:comm}
    G_1 \stackrel{i_1}{\hookleftarrow} H \stackrel{i_2}{\hookrightarrow} G_2,
\end{equation} where $i_1(H),i_2(H)$ have finite index in $G_1,G_2$ respectively. Although the domains $G_1,G_2$ and the codomain $H$ are already specified by the monomorphisms $i_1,i_2$ of a commensuration, it will be convenient for us (and equivalent) to refer to commensurations by diagrams such as the one given in \eqref{eqn:comm}.

We say $G_1,G_2$ are \define{co-commensurable} if they are the domains of the monomorphisms of a co-comensuration, i.e. they can both be embedded as finite index subgroups of a common overgroup. We say that a commensuration $(i_1,i_2)$ is \emph{normal} if the common domain of $i_1,i_2$ maps to normal subgroups of the codomains $G_1,G_2$ respectively. We say that a commensuration $(i_1,i_2)$ \emph{extends} a commensuration $(h_1,h_2)$ if we have commuting diagram\[
  \begin{tikzcd}
    &E \arrow[hookrightarrow]{dr}{h_2} \arrow[d,phantom,sloped,"\leq"]& \\
    G_1\arrow[hookleftarrow]{ru}{h_1}\arrow[hookleftarrow]{r}{i_1}  &H \arrow[hookrightarrow]{r}{i_2}  & G_2\\
  \end{tikzcd}
\] where $E$ is a finite index subgroup of $H$. The following result follows easily from the fact that if $G_1,G_2$ are finite index subgroups of a finitely generated group $K$, then their intersection contains a subgroup $H$ that is normal in $K$ and therefore in $G_1,G_2$.

\begin{prop}\label{prop:cocom_to_com}
    If $G_1,G_2$ are co-commensurable then there exists a normal commensuration $G_1 \stackrel{i_1}{\hookleftarrow} H \stackrel{i_2}{\hookrightarrow} G_2$ between $G_1$ and $G_2$.
\end{prop}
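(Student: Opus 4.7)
The plan is to start from the hypothesis of co-commensurability: there is a common overgroup $K$ and monomorphisms $j_1\colon G_1\hookrightarrow K$, $j_2\colon G_2\hookrightarrow K$ whose images are finite-index subgroups. I will identify $G_1,G_2$ with $j_1(G_1),j_2(G_2)\leq K$ and then produce a subgroup $H\leq G_1\cap G_2$ that is simultaneously normal and finite-index in both $G_1$ and $G_2$. Declaring the inclusions $H\hookrightarrow G_1$ and $H\hookrightarrow G_2$ to be the two monomorphisms of the commensuration then gives the desired normal commensuration.

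The key input is the standard normal-core construction. For each $i\in\{1,2\}$, let
\[
  N_i \;=\; \bigcap_{k\in K}\, k\,G_i\,k^{-1}.
\]
Because $[K:G_i]<\infty$, the action of $K$ on the finite coset space $K/G_i$ induces a homomorphism $K\to\mathrm{Sym}(K/G_i)$ whose kernel is exactly $N_i$. Hence $N_i$ is normal in $K$ and $[K:N_i]<\infty$. The intersection $H=N_1\cap N_2$ is then a finite intersection of finite-index normal subgroups of $K$, so $H$ is itself normal in $K$ and satisfies $[K:H]<\infty$.

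It remains to check that $(i_1,i_2)$, given by the inclusions $H\hookrightarrow G_1$ and $H\hookrightarrow G_2$, is a normal commensuration between $G_1$ and $G_2$. Since $H\leq N_i\leq G_i$, the maps are well-defined injections. Since $[K:H]<\infty$ and $G_i\leq K$, we have $[G_i:H]\leq[K:H]<\infty$, so the images have finite index. Finally, because $H$ is normal in the larger group $K$, it is a fortiori normal in each intermediate subgroup $G_i$, which is exactly the condition that $(i_1,i_2)$ be normal in the sense defined above.

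There is essentially no obstacle here; the argument is entirely standard once the normal-core observation is in hand. I would note that the hint's mention of finite generation of $K$ is not strictly necessary, since the coset action $K\to\mathrm{Sym}(K/G_i)$ lands in a finite symmetric group whenever $[K:G_i]<\infty$, regardless of the cardinality of a generating set for $K$.
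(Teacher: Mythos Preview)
Your proof is correct and follows essentially the same approach as the paper: the paper simply states that the result ``follows easily from the fact that if $G_1,G_2$ are finite index subgroups of a finitely generated group $K$, then their intersection contains a subgroup $H$ that is normal in $K$ and therefore in $G_1,G_2$,'' and you have spelled out this normal-core argument explicitly. Your closing remark is also correct: the paper's reference to finite generation of $K$ is unnecessary, since the coset-action argument works whenever the index is finite.
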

 
Thus co-commensurability implies commensurability, in fact it implies the existence of a normal commensuration. We now investigate the converse. A \define{completion} of a commensuration $(i_1,i_2)$ is a co-commensuration $(j_1,j_2)$ making the following diagram commute\begin{equation}\label{eqn:completion}
\begin{tikzcd}
    &H \arrow[hookrightarrow]{dr}{i_2}&\\
    G_1\arrow[hookleftarrow]{ru}{i_1}  \arrow[hookrightarrow]{dr}{j_1}&&G_2\\
    &K\arrow[hookleftarrow]{ru}{j_2} &\\
\end{tikzcd}.
\end{equation}

Trivial commensurations obviously admit completions.

\begin{lem}\label{lem:complete-extension}
  If a commensuration $(i_1,i_2)$ admits a completion then it extends a normal commensuration.
\end{lem}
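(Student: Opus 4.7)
The plan is to build a finite-index normal subgroup of $K$ that already lies inside the image of $H$, then pull it back through $H$ to obtain the required refinement $E$. Commutativity of the diagram \eqref{eqn:completion} means that $j_1 \circ i_1 = j_2 \circ i_2$, so the two compositions amount to a single monomorphism $\iota : H \hookrightarrow K$. Since $i_1(H) \leq G_1$ and $j_1(G_1) \leq K$ are both finite-index, the image $\iota(H)$ is a finite-index subgroup of $K$.

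Next, I would take $N$ to be the normal core of $\iota(H)$ in $K$, that is $N := \bigcap_{k \in K} k\, \iota(H)\, k^{-1}$. By the standard argument (the left action of $K$ on $K/\iota(H)$ produces a homomorphism into a finite symmetric group whose kernel is $N$), $N$ is normal in $K$, of finite index in $K$, and by construction $N \leq \iota(H)$. Set $E := \iota^{-1}(N) \leq H$; injectivity of $\iota$ together with $[\iota(H):N] < \infty$ forces $[H:E] < \infty$.

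It remains to check that $h_1 := i_1|_E$ and $h_2 := i_2|_E$ form a normal commensuration. Restrictions of $i_1, i_2$ to a finite-index subgroup still have finite-index images in $G_1, G_2$, so $(h_1,h_2)$ is a commensuration, and commutativity of the extension square is automatic. For normality, observe $j_1(h_1(E)) = \iota(E) = N$ since $N \leq \iota(H)$; as $N$ is normal in $K$ and $j_1(G_1) \leq K$, conjugation of $N$ by any element of $j_1(G_1)$ is trivial, and injectivity of $j_1$ transports this back to $h_1(E) \trianglelefteq G_1$. The identical argument with $j_2$ gives $h_2(E) \trianglelefteq G_2$.

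The main subtlety, which the choice of normal core handles cleanly, is arranging that the normal subgroup of $K$ we find is actually contained in $\iota(H)$ so that it admits a preimage in $H$; anything less carefully chosen (for instance the normal core of $j_1(G_1)$ in $K$) would not pull back through $H$ at all. Everything else is bookkeeping with standard properties of finite-index subgroups.
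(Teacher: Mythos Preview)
Your proof is correct and follows essentially the same approach as the paper: both take the normal core of the image of $H$ in $K$, pull it back to $E \leq H$, and use normality in $K$ (hence in $j_1(G_1)$ and $j_2(G_2)$) together with injectivity of $j_1,j_2$ to conclude $h_l(E)\trianglelefteq G_l$. One phrasing quibble: ``conjugation of $N$ by any element of $j_1(G_1)$ is trivial'' should read ``preserves $N$'' rather than ``is trivial,'' but your intended meaning is clear from the conclusion you draw.
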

\begin{proof}
  Suppose that the commensuration $(i_1,i_2)$ admitted a completion $(j_1,j_2)$ with common codomain $K$. Then $j_1\circ i_1 (H) = j_2\circ i_2 (H)$ is a finite index subgroup of $K$ and therefore contains a finite index normal subgroup $N$. $N$ is normal in $j_1(G_1)$ and $j_2(G_2)$ as well. Taking $E$ to be the subgroup the subgroup $E=i_1|_{i_1(H)}^{-1}\circ j_1|_{j_1(G_1)}^{-1}(N)\leq H$ gives the required normal commensuration. 
\end{proof}

Given a commensuration $G_1 \stackrel{i_1}{\hookleftarrow} H \stackrel{i_2}{\hookrightarrow} G_2$ we can form the \define{associated amalgamated free product} which we will denote as $G_1*_HG_2$. Given the commensuration $(i_1,i_2)$ the construction of the amalgamated free product is completely standard as a pushout, or fibered coproduct, in the category of groups.

Symmetrically, given an amalgamated free product $G_1*_HG_2$ where the image of the amalgamating subgroup is finite index in the factors $G_1$ and $G_2$, we can form the \define{associated commensuration}. It is worth emphasizing that although our amalgamated free product notation suppresses mention of the monomorphisms $i_1,i_2$, crucial properties of $G_1*_HG_2$ will depend not only on the triple of groups $G_1,H,G_2$ but also on the specific monomorphisms $i_1,i_2.$ We now give a first obstruction to completing a commensuration.

\begin{lem}\label{lem:coco_implies_finite_quotients}
    Let $G_1,G_2$ be finitely generated groups. If a non-trivial commensuration $G_1 \stackrel{i_1}{\hookleftarrow} H \stackrel{i_2}{\hookrightarrow} G_2$ admits a completion then the induced amalgamated free product $G_1*_HG_2$ admits an infinite virtually free quotient. In particular, $G_1*_HG_2$ has a nontrivial finite quotient.
\end{lem}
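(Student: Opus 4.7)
The strategy is to leverage the completion to produce a substantial common normal subgroup of $G_1$ and $G_2$, then quotient $G_1 *_H G_2$ by it and recognize the result as an amalgam of finite groups.

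First, I would apply Lemma \ref{lem:complete-extension} to obtain a finite-index subgroup $E \leq H$ for which $N_1 := i_1(E)$ is normal in $G_1$ and $N_2 := i_2(E)$ is normal in $G_2$. Since $E$ is in particular normal in $H$, the quotients $G_1/N_1$, $H/E$, and $G_2/N_2$ are well defined, and the inclusions $H/E \hookrightarrow G_k/N_k$ induced by $i_k$ form a commensuration of finite groups.

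Next, I would examine the image of $E$ inside $G_1 *_H G_2$. Since this image coincides with both $i_1(E)$ and $i_2(E)$ inside the respective vertex groups, and each of these is normal in the corresponding vertex group, the normal closure of $E$ in the amalgam equals $E$ itself. The pushout description of $G_1 *_H G_2$ then delivers a natural isomorphism
\[
    (G_1 *_H G_2)/E \;\cong\; (G_1/N_1) *_{H/E} (G_2/N_2).
\]
This is a finitely generated amalgam of two finite groups over a common finite subgroup; it acts cocompactly on its Bass-Serre tree with finite vertex (and edge) stabilizers, and is therefore virtually free. Non-triviality of the original commensuration ensures that $[G_k/N_k : H/E] = [G_k : i_k(H)] \geq 2$ for $k = 1,2$, so the Bass-Serre tree has more than one edge at each vertex and the quotient is infinite.

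Finally, the ``in particular'' clause follows from the standard fact that a finitely generated virtually free group is residually finite, so any such group that is infinite admits a nontrivial finite quotient. The step requiring the most care is verifying that the normal closure behaves as claimed and that the quotient takes the displayed amalgamated form, but this is immediate from the universal property of $G_1 *_H G_2$; everything else reduces to standard consequences of Bass-Serre theory.
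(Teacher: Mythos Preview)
Your argument is correct and follows essentially the same route as the paper: use Lemma~\ref{lem:complete-extension} to obtain a subgroup $E\leq H$ whose images are normal in $G_1,G_2$, observe that $E$ is then normal in the amalgam, and identify the quotient as an amalgam of finite groups which is virtually free (the paper cites \cite{karrass_finite_1973} where you invoke Bass--Serre theory directly). Your proof is in fact slightly more explicit than the paper's in justifying infiniteness of the quotient via non-triviality of the commensuration.
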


\begin{proof}
  By Lemma \ref{lem:complete-extension} $(i_1,i_2)$ extends a normal commensuration over some group $N\leq H$. Identifying $H$ with its canonical image in $G_1*_HG_2$, we get that $N \leq G_1*_HG_2$ is a finite index subgroup. Now since, by definition of a normal commensuration, $N$ is normal in both $G_1$ and $G_2$ it is normalized by a generating set of $G_1*_HG_2$ and is therefore normal in the entire amalgamated free product. Now it is easy to see that \[
(G_1*_HG_2)/N \simeq (G_1/N)*_{H/N}(G_2/N)
    \] which by \cite[Theorem 1]{karrass_finite_1973} is virtually free and therefore residually finite. In particular $G_1*_HG_2$ admits a non-trivial finite quotient.

\end{proof}

\begin{cor}\label{cor:incompletable1}
    There is a commensuration $F_3 \stackrel{i_1}{\hookleftarrow} F_6 \stackrel{i_2}{\hookrightarrow} F_3$ where $F_3,F_6$ denote the free groups of ranks 3,6 respectively that does not admit a completion.
\end{cor}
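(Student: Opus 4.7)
The plan is to invoke the contrapositive of Lemma \ref{lem:coco_implies_finite_quotients}. It suffices to exhibit a non-trivial commensuration of the stated form whose associated amalgamated free product $F_3 *_{F_6} F_3$ admits no non-trivial finite quotient; then by that lemma the commensuration cannot admit a completion.

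First I would extract from Bhattacharjee's paper \cite{bhattacharjee_constructing_1994} an amalgamated free product of two free groups over a common finite-index free subgroup which is infinite but has trivial profinite completion. Her constructions are precisely of the form $A *_C B$ with $A$ and $B$ free and $C$ a common finite-index subgroup of each, producing finitely presented infinite groups with no non-trivial finite quotient. The two inclusions of the amalgamating subgroup into the factors are exactly the commensuration data $(i_1,i_2)$ that I need. This commensuration is non-trivial because the free ranks of the common domain and of the codomains differ, so neither $i_1$ nor $i_2$ can be an isomorphism.

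With such an example in hand, the argument is immediate: if $(i_1,i_2)$ admitted a completion, then Lemma \ref{lem:coco_implies_finite_quotients} would supply an infinite virtually free, and therefore residually finite, quotient of $F_3 *_{F_6} F_3$; in particular $F_3 *_{F_6} F_3$ would have a non-trivial finite quotient, contradicting Bhattacharjee's property.

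The only genuine difficulty is pinning down a specific Bhattacharjee-type construction that realises the exact ranks asserted in the corollary; all of the conceptual work is already packaged into Lemma \ref{lem:coco_implies_finite_quotients} together with the main construction of \cite{bhattacharjee_constructing_1994}, so there is no substantial additional obstacle to overcome.
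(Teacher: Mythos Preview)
Your proposal is correct and follows essentially the same route as the paper: both invoke Bhattacharjee's construction of an amalgam $F_3*_{F_6}F_3$ with no non-trivial finite quotients and then apply the contrapositive of Lemma~\ref{lem:coco_implies_finite_quotients}. The paper simply asserts that \cite{bhattacharjee_constructing_1994} already produces these exact ranks, which resolves the one point you flagged as a residual difficulty.
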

\begin{proof}
    In \cite{bhattacharjee_constructing_1994} an amalgamated free product $F_3*_{F_6}F_3$ is constructed with the following properties that, firstly, the subgroup $F_6$ embeds as a finite index subgroup of each factor groups and, secondly, that $F_3*_{F_6}F_3$ is \define{nearly simple}, which means that has not finite quotients. Therefore, by Lemma \ref{lem:coco_implies_finite_quotients}, the commensuration associated to this this amalgamated free product does not admit a completion.
\end{proof}

\section{An incompletable commensuration from finite degree common covers}
Any continuous function $f:(X,x) \to (Y,y)$ between path-connected based topological spaces that admit a fundamental group gives rise to a homomorphism $f_\sharp: \pi_1(X,x) \to \pi_1(Y,y)$ of fundamental groups. Conversely, any homomorphism between finitely generated groups can be realized by a continuous map between 2-complexes. It is not clear, however, whether the incompletable commensuration given in Corollary \ref{cor:incompletable1} can be realized by a pair of covering maps, which is a stronger geometric requirement. Such a pair can be found by taking a close look at the construction in \cite{rattaggi_three_2007}. 

\begin{thm}\label{thm:no-BK}
    There exist finite graphs $Z,X_1,X_2$ and finite degree covering maps $p_i:Z \to X_i, i=1,2$ such that the induced commensuration\[
        \pi_1(X_1) \stackrel{(p_1)_\sharp}{\hookleftarrow} \pi_1(Z) \stackrel{(p_2)_\sharp}{\hookrightarrow} \pi_1(X_2)
     \] does not admit a completion.
\end{thm}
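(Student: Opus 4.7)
The plan is to extract, from Rattaggi's construction in \cite{rattaggi_three_2007}, a nearly simple amalgamated free product of finite-rank free groups that is realized geometrically by covering maps of finite graphs. Rattaggi refines Bhattacharjee's construction using VH-square complexes, and the same combinatorial data that define the amalgam can be repackaged into the graph-of-spaces picture needed here.

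Concretely, I would first locate in \cite{rattaggi_three_2007} a finite VH-square complex $Y$ whose fundamental group $\Gamma = \pi_1(Y)$ decomposes as an amalgamated free product $\Gamma = F_m \ast_H F_n$, where $F_m, F_n$ are finite-rank free groups, $H$ is of finite index in both factors, and $\Gamma$ admits no nontrivial finite quotient. In Rattaggi's one-vertex VH-complexes, the combinatorial link data at the vertex describes this amalgam explicitly, with horizontal and vertical edges furnishing the two free factors and the square relations cutting out the amalgamating subgroup $H$.

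Next, I would realize the commensuration $F_m \hookleftarrow H \hookrightarrow F_n$ by covering maps. For each $i$, pick a finite graph $X_i$ with $\pi_1(X_i)$ free of the appropriate rank, and consider the finite cover $Z_i \to X_i$ corresponding to the subgroup $H$. Each $Z_i$ is a finite graph with $\pi_1(Z_i) \cong H$. The key feature of Rattaggi's construction is that the horizontal and vertical 1-skeleta of $Y$ furnish the two covers in a combinatorially compatible way, so that $Z_1$ and $Z_2$ can be identified as a single graph $Z$ with covering maps $p_i : Z \to X_i$ inducing the desired commensurating inclusions on $\pi_1$. By construction, the associated amalgamated free product $\pi_1(X_1) \ast_{\pi_1(Z)} \pi_1(X_2)$ is then precisely $\Gamma$.

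The conclusion is immediate: if the induced commensuration admitted a completion, Lemma \ref{lem:coco_implies_finite_quotients} would produce a nontrivial finite quotient of $\Gamma$, contradicting near-simplicity. The main obstacle I anticipate is the second step --- verifying that the VH-structure on $Y$ really furnishes compatible common covers, rather than only an abstract amalgam whose finite covers $Z_i$ of $X_i$ might a priori be non-isomorphic as graphs. Since Rattaggi's examples are combinatorially explicit, the verification should reduce to careful bookkeeping of the horizontal/vertical link data at the vertex of $Y$, which both identifies $X_1$ and $X_2$ as natural quotient graphs and exhibits the common cover $Z$ directly from the 1-skeleton of $Y$.
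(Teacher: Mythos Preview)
Your proposal is correct and follows essentially the same route as the paper: extract from Rattaggi's nearly-simple $\calV\calH$-complex the two horizontal subgraphs $X_1, X_2$ and the mid-level hyperplane graph $Z$ (vertices at midpoints of vertical edges, edges running across the squares), then apply Lemma~\ref{lem:coco_implies_finite_quotients}. The one place the paper diverges from your sketch is in verifying that the $p_i$ are genuine covering maps: rather than direct bookkeeping through Rattaggi's relator table (which it remarks would also work), it gives a short indirect argument using an Euler-characteristic count and Stallings folds to force local bijectivity.
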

\begin{proof}
  For this proof $F_n$ shall denote the free group of rank $n$. In \cite{rattaggi_three_2007} groups $\Lambda_1,\Lambda_3$ are constructed where $\Lambda_1$ is simple and $\Lambda_3$ has no finite quotients. Both groups decompose as amalgamated free products $F_9*_{F_{81}}F_9$ where the amalgamating subgroup $F_{81}$ embeds as a finite index in both $F_9$ factors. By Lemma \ref{lem:coco_implies_finite_quotients} both of these amalgamated products are associated to commensurations that do not admit completions. We will only consider $\Lambda_1$, the treatment of $\Lambda_3$ being identical.

  $\Lambda_1$ is the fundamental group of a 2-complex $\calX$ that is a degree 4 cover of a bouquet (or wedge product) of 10 circles to which 25 square 2-cells are attached. $\calX$ is a \emph{$\calV\calH$-complex}, i.e. a 2-complexes all of whose 2-cells are squares and whose edges can be partitioned into vertical and horizontal edges. This partition is obtained closing the relation ``two edges have the same orientation if they are on opposite sides of a square'' under reflexivity and transitivity to an equivalence relation. The 1-skeleton $\calX^{(1)}$ of $\calX$ (a degree 4 cover of the bouquet of 10 circles), as well as one of the 2-cells is shown in Figure~\ref{fig:sq-complex}.
  \begin{figure}[htb]
    \centering
    \includegraphics[width=\textwidth]{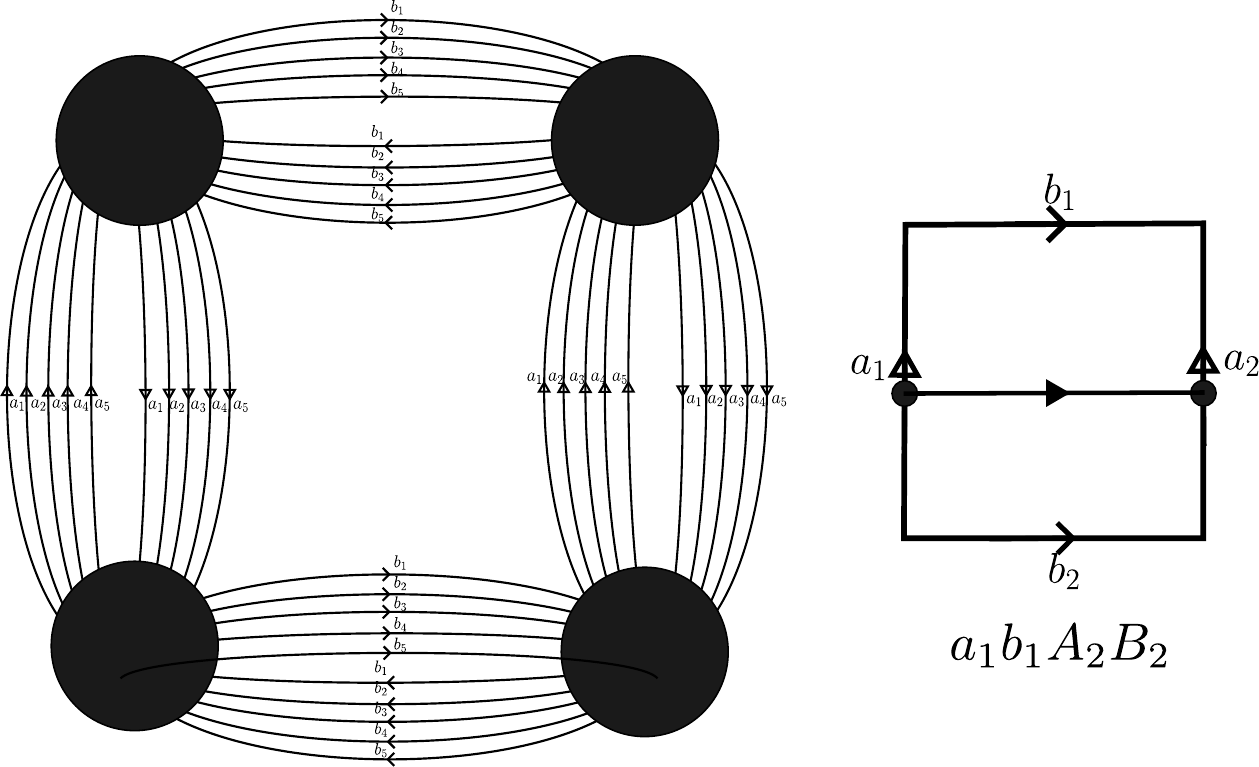}
    \caption{The labeled 1-skeleton for $\calX$ and a 2-cell corresponding to the relator $a_1b_1A_2B_2$. An edge and two vertices of the graph $Z$ are drawn in the square.}
    \label{fig:sq-complex}
  \end{figure}
  The graphs $X_1,X_2$ are respectively the top and bottom horizontal graphs shown in Figure \ref{fig:sq-complex} with directed edges labelled $b_1,\ldots,b_5$. Now the 2-complex $\calX$ has 100 squares, 4 for each of the relations given in \cite[Table 1]{rattaggi_three_2007}. The graph $Z \subset \calX$ is constructed as follows: its vertices are the midpoints of the vertical edges in $\calX^{(1)}$ and the edges of $Z$ are the segments in the squares obtained by joining these vertical edge midpoints. Such an edge is shown inside a square in Figure \ref{fig:sq-complex}. There is are mappings $p_i:Z \to X_i, i=1,2$ that can be defined on the edges of $Z$ as follows: an edge $e \in E(Z)$ lying in a square $\sigma$ is mapped to the top of $\sigma$, which lies in $X_1$ via $p_1$ and to the bottom edge in $X_2$ via $p_2$. These mappings on edges assemble to continuous maps $p_i:Z\to X_i$ (see \cite[\S 1.1]{wise_non-positively_1996} for more details and extended definitions.)

  Cutting $\calX$ along the graph $Z$ and then regluing expresses \[
    \pi_1(\calX) \simeq \pi_1(X_1)*_{\pi_1(Z)}\pi_1(X_2)\] by the Seifert-Van Kampen Theorem. The homomorphisms $\pi_1(Z) \hookrightarrow \pi(X_i)$ are given by $(p_i)_\sharp, i=1,2$. In particular in \cite{rattaggi_three_2007} these mappings are verified to have finite index images in their codomains. While it would be possible to go through the relations in \cite[Table 1]{rattaggi_three_2007} and verify that the combinatorial maps $p_i:Z\to X_i$ are indeed covering maps, we will give a less direct argument why this is true.

  First note that $Z$ has 20 vertices and 100 edges, therefore $\chi(Z)=-80$ so $\pi_1(Z) \simeq F_{81}$ which means that $p_i$ is $\pi_1$-injective. We also note that every vertex of $X_i$ has degree exactly 10. $p_i$ is also injective when restricted to edges, in fact $p_i$ is a combinatorial map. Thus if $p_i$ fails to be injective it will be at a vertex. We will first argue that $p_i$ must be locally injective.

  If $p_i$ isn't locally injective then will factor through a folding $Z \to Z'$ which is a surjective combinatorial map to another graph $Z'$ obtained by identifying two edges (see \cite{stallings_topology_1983}). We can repeatedly apply folding moves until we get $Z \to Z_F$ such that $p_i$ factors as $Z \to Z_F \to X_i$ and such that $Z_F \to X_i$ is locally injective combinatorial map. There are 100 edges in $Z$ which together contribute 200 to the sum of vertex degrees and there are 20 vertices giving an average degree of 10. Whenever a folding of edges occurs either two vertices get identified, or there were two edges with the same endpoints that get identified and the number of vertices is unchanged. In the latter case, a non-nullhomotopic cycle gets killed, which is impossible due to $\pi_1$-injectivity. This means that every folding move decreases the number of vertices by 1 and decreases the sum of the degrees by 2. Now the function\[
    A(f) = \frac{200-2f}{20-f}
  \] that gives the average degree of the vertices after $f$ folds is strictly increasing for $f\in[0,20)$. This means that $Z_F$ must have some vertex with degree greater than 10 contradicting the fact that the map $Z_F\to X_i$ is locally injective.

  It follows that $p_i$ must be locally injective, i.e. it does not factor through a folding map. Furthermore it must be locally surjective, otherwise this means it will have a vertex of degree less than 10, which because of the average degree of 10, implies that $Z$ must have some other vertex $w$ with degree more than 10, but then $p_i$ couldn't possibly be injective at $w$. Since $p_i,i=1,2$ are locally bijective combinatorial maps, we conclude that they are covering maps and this completes the proof.
\end{proof}

\section{Necessary and sufficient conditions to complete a commensuration}\label{sec:nec-suff}

So far we have been using Lemma \ref{lem:complete-extension} to produce commensurations that cannot be completed. At this point it is natural to ask whether the converse of Lemma \ref{lem:complete-extension} holds. Specifically, if given a commensuration $G_1 \stackrel{i_1}{\hookleftarrow} H \stackrel{i_2}{\hookrightarrow} G_2$ does the existence of a finite index subgroup $N\subset H$ such that the images $i_1(N)\leq G_1$ and $i_2(N)\leq G_2$ are normal in their respective overgroups imply that the commensuration is completable? In other words, does a commensuration along normal subgroups guarantee that there is a completion?

It turns out that there is another important necessary condition that we will now present. Recall that if $N\leq G$ is a normal subgroup then conjugation gives homomorphisms $\calA: G \to \aut(N)$ and $\calO:G/N \to \out(N)$, where $\mathrm{Inn}(N)$ denotes the group of inner automorphisms  and $\out(N) = \aut(N)/\mathrm{Inn}(N)$. In particular, we have the following commutative diagram
\[
\begin{tikzcd}
    G \arrow{r}{\calA}\arrow[two heads]{d} & \aut(N) \arrow[two heads]{d}\\
    G/N \arrow{r}{\calO} & \out(N)
\end{tikzcd}
\]
where the vertical arrows are the canonical quotient maps. We will abuse notation and also write $\calO:G \to \out(N)$ to mean the natural composition $G\to G/N \stackrel{\calO}\to \out(N)$. If $N\leq G$ is a finite index normal subgroup then the image of $G$, or $G/N$, in $\out(N)$ is finite. Thus given a normal commensuration $(i_1,i_2)$ between $G_1$ and $G_2$ over a group $H$, the groups $\calO(G_i/i_i(H)) \leq \out(H)$, must be finite for $i=1,2$. Our second necessary condition is given by the proposition below.

\begin{prop}\label{prop:condition2}
    If a normal commensuration $G_1 \stackrel{i_1}{\hookleftarrow} H \stackrel{i_2}{\hookrightarrow} G_2$  admits a completion then the subgroup\[
\langle \calO(G_1/i_1(H)),\calO(G_2/i_2(H)) \rangle \leq \out(H)    
    \] is finite.
\end{prop}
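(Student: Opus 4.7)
The plan is to use the completion to produce a concrete overgroup that normalizes (the image of) $H$ and has $H$ as a finite-index subgroup; the desired subgroup of $\out(H)$ will then arise as a quotient of a finite group.

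To begin, let the completion be given by the embeddings $j_1\colon G_1\hookrightarrow K$ and $j_2\colon G_2\hookrightarrow K$, so by commutativity of the completion diagram the map $\iota:=j_1\circ i_1=j_2\circ i_2\colon H\hookrightarrow K$ is well-defined. Write $H'=\iota(H)$ and identify $G_1,G_2$ with their images in $K$. Because the original commensuration $(i_1,i_2)$ is normal, $H'$ is normal in $G_1$ and in $G_2$. The crucial observation is that although $H'$ need not be normal in the ambient $K$, it is automatically normalized by the subgroup $\widetilde K:=\langle G_1,G_2\rangle\leq K$, since each of the two generating subgroups normalizes $H'$.

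Next I would observe that $\widetilde K$ has finite index in $K$ (it contains $G_1$, which already does), and that $H'$ has finite index in $\widetilde K$ (it has finite index in $G_1$, which has finite index in $\widetilde K$). Consequently the quotient $\widetilde K/H'$ is finite. Conjugation in $\widetilde K$ now yields a homomorphism $\widetilde K\to\aut(H')$, and since the image of $H'$ itself under this map is exactly $\mathrm{Inn}(H')$, the composition with the projection to $\out(H')$ factors through $\widetilde K/H'$, giving a map
\[
\Phi\colon \widetilde K/H' \longrightarrow \out(H')\cong\out(H),
\]
where the last identification uses $\iota$ (unambiguously, because $j_1\circ i_1=j_2\circ i_2$).

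To conclude, I would note that by construction $\Phi$ restricted to the images of $G_1/H'$ and $G_2/H'$ recovers precisely $\calO(G_1/i_1(H))$ and $\calO(G_2/i_2(H))$ respectively. Hence
\[
\langle \calO(G_1/i_1(H)),\,\calO(G_2/i_2(H))\rangle \leq \mathrm{Image}(\Phi),
\]
which is a quotient of the finite group $\widetilde K/H'$, and is therefore finite.

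I do not anticipate a serious obstacle: the argument is essentially bookkeeping, and the only delicate point is checking that the isomorphism $\out(H)\cong\out(H')$ used to interpret the outer-automorphism actions of $G_1$ and $G_2$ simultaneously is independent of which of $i_1,i_2$ one uses, which is guaranteed by commutativity of the completion diagram.
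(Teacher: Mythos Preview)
Your proof is correct and follows essentially the same approach as the paper: pass from $K$ to the subgroup $\widetilde K=\langle G_1,G_2\rangle$, observe that $H$ is normal there with finite index, and deduce that both $\calO(G_i/i_i(H))$ lie in the finite image $\calO(\widetilde K/H)\leq\out(H)$. Your write-up is in fact slightly more careful than the paper's about the identification $\out(H)\cong\out(H')$ via the commutative completion diagram.
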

\begin{proof}
    By hypothesis our commensuration admits a completion $(j_1,j_2)$ as in \eqref{eqn:completion}. Replacing $K$ with $\langle j_1(G_1),j_2(G_2) \rangle$ if necessary and identifying $H,G_1,G_2$ with their images in $K$ we can assume that we have a subgroup inclusions\[
    \begin{tikzcd}
        &H \arrow[dl,phantom,sloped,"\geq"] \arrow[dr,phantom,sloped,"\leq"]&\\
        G_1 \arrow[dr,phantom,sloped,"\leq"]& &G_2 \arrow[dl,phantom,sloped,"\geq"]\\
        &K&
    \end{tikzcd}
  \] and that $K = \langle G_1,G_2\rangle.$ Since $H$ is normal in $G_1$ and $G_2$ we have that it is normal in $K$. Since $H\leq G_i \leq K$ we have a natural inclusions $\calO(G_i/H)\leq \calO(K/H), i=1,2$. Since $H$ has finite index in $K$ we have that $\calO(K/H) \geq \langle\calO(G_1/H),\calO(G_2/H)\rangle$ is finite.
\end{proof}

  Using the notation from Propostion \ref{prop:condition2} above, we say that a normal commensuration is \emph{out-finite} if the subgroup $\langle \calO(G_1/i_1(H)),\calO(G_2/i_2(H)) \rangle \leq \out(H)$ is finite.  It's not hard to see the following.

  \begin{lem}\label{lem:any-normal}
    If a commensuration admits a completion, then any normal commensuration it extends must be out-finite.
  \end{lem}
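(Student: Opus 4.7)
The plan is to reduce directly to Proposition \ref{prop:condition2} by showing that any completion of a commensuration automatically serves as a completion of every commensuration it extends.

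Concretely, I would begin by unpacking notation. Suppose $(i_1,i_2)$ is a commensuration over $H$ admitting a completion $(j_1,j_2)$ into a group $K$, and suppose $(i_1,i_2)$ extends a normal commensuration $(h_1,h_2)$ over a finite index subgroup $E \leq H$. By the definition of extension in Section \ref{sec:co-comm}, the inclusion $\iota : E \hookrightarrow H$ satisfies $i_1 \circ \iota = h_1$ and $i_2 \circ \iota = h_2$. A one-line diagram chase then shows that $(j_1,j_2)$ is a completion of $(h_1,h_2)$ as well: precomposing the commuting identity $j_1 \circ i_1 = j_2 \circ i_2$ on the right with $\iota$ yields $j_1 \circ h_1 = j_2 \circ h_2$, and each $h_k(E) \leq G_k$ has finite index, since $E$ has finite index in $H$ and $i_k(H)$ has finite index in $G_k$.

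Once this identification is made, I would apply Proposition \ref{prop:condition2} directly to the normal commensuration $(h_1,h_2)$, using the completion $(j_1,j_2)$. The conclusion is that $\langle \calO(G_1/h_1(E)), \calO(G_2/h_2(E)) \rangle \leq \out(E)$ is finite, which is precisely the statement that $(h_1,h_2)$ is out-finite.

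I do not anticipate any substantive obstacle. The lemma is essentially the observation that completability is monotone under extension of commensurations, after which all the actual content is supplied by Proposition \ref{prop:condition2}. The only point worth being mildly careful about is keeping track of which maps live on $E$ versus on $H$, so that the factorizations $h_k = i_k \circ \iota$ are applied on the correct side when composing with $j_k$.
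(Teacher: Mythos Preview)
Your proposal is correct and follows the natural argument the paper implicitly has in mind; the paper states this lemma without proof after Proposition~\ref{prop:condition2}, remarking only that it is ``not hard to see'', and your observation that a completion of $(i_1,i_2)$ is automatically a completion of any $(h_1,h_2)$ it extends, followed by an application of Proposition~\ref{prop:condition2}, is exactly the intended route.
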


  While this next result is not difficult, it's worth recording for completeness, especially since no analogues come from the theory of lattices in $\mathrm{Isom}(\bbH^n)$.

\begin{prop}\label{prop:out-finite}
    There are normal commensurations that are not out-finite.
\end{prop}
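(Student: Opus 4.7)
The plan is to produce an explicit example built from $H=\bbZ^2$. The motivation for this choice is that $H$ is abelian, so $\mathrm{Inn}(H)$ is trivial and $\out(H)=\aut(H)=GL_2(\bbZ)$, a group that is infinite yet contains many elements of finite order. This leaves room to exhibit two finite-order actions on $H$ whose combined image in $\out(H)$ is infinite.

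The concrete construction I would use is as follows. Choose two finite-order elements $\phi_1,\phi_2\in GL_2(\bbZ)$ whose subgroup $\langle\phi_1,\phi_2\rangle\leq GL_2(\bbZ)$ is infinite, and form the semidirect products $G_i:=\bbZ^2\rtimes\langle\phi_i\rangle$ with the tautological action on $\bbZ^2$. Let $i_i\colon H\hookrightarrow G_i$ be the inclusion of $\bbZ^2$ as the displayed normal factor. Since each $\langle\phi_i\rangle$ is finite, $i_i(H)$ is a finite-index normal subgroup of $G_i$, so $(i_1,i_2)$ is a normal commensuration over $H$. Unwinding the definition of $\calO$, its restriction to $G_i/i_i(H)\cong\langle\phi_i\rangle$ is the tautological inclusion into $GL_2(\bbZ)=\out(H)$, and so $\calO(G_i/i_i(H))=\langle\phi_i\rangle$. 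Consequently
\[
\langle\calO(G_1/i_1(H)),\calO(G_2/i_2(H))\rangle=\langle\phi_1,\phi_2\rangle\leq \out(H),
\]
which is infinite by construction, showing that this normal commensuration fails to be out-finite.

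All that remains is exhibiting a suitable pair $\phi_1,\phi_2$. A convenient choice is the pair of involutions
\[
\phi_1=\begin{pmatrix}0&1\\1&0\end{pmatrix},\qquad \phi_2=\begin{pmatrix}1&2\\0&-1\end{pmatrix},
\]
each of which squares to the identity by a one-line calculation. Their product
\[
\phi_1\phi_2=\begin{pmatrix}0&-1\\1&2\end{pmatrix}
\]
has trace $2$ and determinant $1$, so its characteristic polynomial is $(\lambda-1)^2$ and it is a nontrivial unipotent matrix, hence of infinite order. Therefore $\langle\phi_1,\phi_2\rangle$ is an infinite (dihedral) subgroup of $GL_2(\bbZ)$. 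There is no substantive obstacle in the argument; the only real ingredient is the existence of two finite-order matrices in $GL_2(\bbZ)$ whose product has infinite order, and the pair above supplies this by direct computation. (Structurally, the same phenomenon can be read off from the well-known amalgamated/free-product decomposition of $PGL_2(\bbZ)$ into finite groups, but the explicit matrices are enough for the proposition.)
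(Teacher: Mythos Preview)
Your proof is correct and follows essentially the same approach as the paper: take $H=\bbZ^2$, form semidirect products of $\bbZ^2$ with finite subgroups of $GL_2(\bbZ)=\out(\bbZ^2)$, and observe that the two finite images together generate an infinite subgroup. The only cosmetic difference is that the paper uses $D_4$ and $D_6$ and appeals to the decomposition $GL_2(\bbZ)\simeq D_4*_{D_2}D_6$ to see infiniteness, whereas you use two explicit involutions and check directly that their product is unipotent; your version is a bit more hands-on, while the paper's choice of $D_4,D_6$ is reused in the subsequent stronger Proposition~\ref{prop:virt-z2-not-coco}.
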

\begin{proof}
    Let $G_1 = \bbZ^2 \rtimes D_4$ and $G_2 = \bbZ^2 \rtimes D_6$ where the semidirect factors $D_4,D_6$ act faithfully on $\bbZ^2$ normal factors. Consider any commensuration over the maximal $\bbZ^2$ factors of $G_1,G_2$. Now \[\aut(\bbZ^2)=\out(\bbZ^2) = \mathrm{GL}_2(\bbZ) \simeq D_4*_{D_2}D_6,\] (see \cite[\S I.5.2]{dicks_groups_1989}) where $D_n$ is dihedral symmetry group of the $n$-gon ($|D_n|=2n$). Since every finite subgroup of $\out(\bbZ^2)$ must be isomorphic to a subgroup of $D_4$ or $D_6$, and since $D_4$ has no elements of order 6 and $D_6$ has no elements of order $4$ the subgroup $\langle\calO(D_4),\calO(D_6)\rangle \leq \out(\bbZ^2)$ must be infinite.
\end{proof}

We can even get a stronger result:
\begin{prop}\label{prop:virt-z2-not-coco}
  The semidirect products $G_1=\bbZ^2 \rtimes D_4$ and $G_2=\bbZ^2\rtimes D_6$, where the actions of $D_4$ and $D_6$ are faithful, are not co-commensurable.
\end{prop}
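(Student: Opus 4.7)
The plan is to reduce to the setting of Proposition \ref{prop:out-finite} by locating a canonical normal $\bbZ^2$ subgroup inside any hypothetical common overgroup. I would begin by supposing for contradiction that $G_1, G_2$ both embed as finite index subgroups of some group $K$. Proposition \ref{prop:cocom_to_com} then yields a normal commensuration $G_1 \hookleftarrow H \hookrightarrow G_2$, and the inclusions $G_i \hookrightarrow K$ give it a completion; Lemma \ref{lem:any-normal} therefore forces every normal commensuration it extends to be out-finite.

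The key step is to produce such a normal commensuration that is visibly \emph{not} out-finite. My candidate is the commensuration over $A := H \cap \bbZ^2_1 = H \cap \bbZ^2_2$, where $\bbZ^2_i$ denotes the distinguished $\bbZ^2$ normal subgroup of $G_i$. The identification $A_1 = A_2$ would be established by characterizing $H \cap \bbZ^2_i$ intrinsically inside $H$, namely as the subgroup generated by all rank-$2$ free abelian subgroups of $H$. The key point: if $B \leq H$ is any rank-$2$ free abelian subgroup, then the projection $B \to G_i/\bbZ^2_i$ has finite (hence abelian) image $C$, so $B \cap \bbZ^2_i$ has rank $2$ and therefore has finite index in $\bbZ^2_i$; abelianness of $B$ makes $C$ act trivially on $B \cap \bbZ^2_i$, and hence (by tensoring with $\bbQ$) on all of $\bbZ^2_i$. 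Faithfulness of the $D_4$ (respectively $D_6$) action then forces $C$ to be trivial, so $B \leq \bbZ^2_i \cap H = A_i$. Thus $A_1 = A_2 =: A$, and being intrinsically defined, $A$ is characteristic in $H$, hence normal in each $G_i$. So $G_1 \hookleftarrow A \hookrightarrow G_2$ is a normal commensuration extended by the one over $H$.

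To finish, I would argue exactly as in Proposition \ref{prop:out-finite}. Since $A$ has finite index in $\bbZ^2_i$, the faithful $D_4$ (respectively $D_6$) action on $\bbZ^2_i$ restricts to a faithful action on $A \cong \bbZ^2$, so $\calO(G_i/A) \leq \aut(A) = \mathrm{GL}_2(\bbZ)$ is a copy of $D_4$ (respectively $D_6$). The subgroup $\langle \calO(G_1/A), \calO(G_2/A) \rangle$ then contains elements of both order $4$ and order $6$; since every finite subgroup of $\mathrm{GL}_2(\bbZ)$ is isomorphic to a subgroup of $D_4$ or $D_6$, and neither of these has elements of both those orders, this subgroup must be infinite. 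That contradicts the out-finiteness forced at the outset.

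The main obstacle is the intrinsic identification of the $\bbZ^2$ factor inside $H$: a priori $H$ could harbor many finite index copies of $\bbZ^2$, and without a canonical choice the two subgroups $H \cap \bbZ^2_1$ and $H \cap \bbZ^2_2$ might be different, in which case the reduction to Proposition \ref{prop:out-finite} would fail. The faithfulness of the dihedral actions on $\bbZ^2$ is exactly what makes the maximality argument close.
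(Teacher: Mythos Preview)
Your argument is correct and follows the same strategy as the paper: pass via Proposition~\ref{prop:cocom_to_com} to a normal commensuration over some $H$, locate inside $H$ a characteristic $\bbZ^2$ subgroup on which the dihedral actions remain faithful, and then invoke the order-$4$/order-$6$ incompatibility from Proposition~\ref{prop:out-finite}. The paper simply asserts that $H$ has a characteristic subgroup $N\cong\bbZ^2$ mapping to finite index in each $\bbZ^2_i$, whereas you supply an explicit intrinsic description (the unique maximal rank-$2$ free abelian subgroup, equivalently $H\cap\bbZ^2_i$) and you make the final logical step explicit by citing Lemma~\ref{lem:any-normal}; these are elaborations rather than a different route.
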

\begin{proof}
  Suppose towards a contradiction that $G_1$ and $G_2$ were co-commensurable into a group $K$. Then by Proposition \ref{prop:cocom_to_com} there is a normal commensuration between $G_1$ and $G_2$ over some group $H$. $H$ has a characteristic subgroup $N$ that is isomorphic to $\bbZ^2$, so $N$ will also map to a normal subgroup of $G_1$ and $G_2$. Since $N$ maps to a finite index subgroup of the $\bbZ^2$ semidirect factors of $G_1$ and $G_2$, any linear non-trivial transformation of $\bbZ^2$ induced by conjugation will restrict to a non-trivial linear transformation of $N$ (otherwise it will fix a pair of linearly independent vectors) so the restrictions of the actions of $D_6$ and $D_4$ on $N$ will remain faithful so the argument of the proof of Proposition \ref{prop:out-finite} goes through and the result follows.
\end{proof}

\begin{question}\label{qu:virt-free-cocom}
  Are the groups $F_2\rtimes D_4$ and $F_2 \rtimes D_6$ co-commensurable? Since $\out(F_2) \simeq \out(\bbZ^2) \simeq \mathrm{GL}_2(\bbZ)$, the exact same argument of Proposition \ref{prop:out-finite} excludes the completion of the obvious commensuration over $F_2$. The argument of Proposition \ref{prop:virt-z2-not-coco} however doesn't go through since finite index subgroups of free groups become increasingly complicated.
\end{question}

We can now give, Theorem \ref{thm:converse}, the main technical positive result of this paper which is essentially a converse to Propositions \ref{prop:cocom_to_com},\ref{prop:condition2} and Lemma \ref{lem:any-normal}. Before proving stating and proving this theorem  we will fix some notation and give some auxiliary results. If $N,K$ are subgroups of some group $G$ we will write $NK = N\times K$ if $NK$ is a subgroup and the map $(n,k)\mapsto nk$ gives an isomorphism $N\times K \stackrel\sim\to NK$. Also, when convenient, we will express elements of $NK$ as pairs $(n,k)$ and that we identify $N,K$ with $N\times\{1\},\{1\}\times K$  respectively.

\begin{lem}[{Untwisting lemma (c.f. \cite[Lemma 5.1]{minasyan_virtual_2025})}]\label{lem:untwisting}
    Let $N\leq G$ be a normal subgroup such that the image $\calO(G/N) \leq \out(N)$ is finite  and let $J\leq G$ be a finitely generated free subgroup with $J\cap N = \{1\}$. Then there is a subgroup $K \leq NJ$ such that $K\cap N = \{1\}$, \[
    NK = N\times K,
    \] and the composition\[K \stackrel\sim\to NK/N \leq NJ/N \stackrel{\sim}{\to} J\] naturally maps $K$ to a finite index subgroup of $J$, so that $NK$ is a finite index subgroup of $NJ$.\end{lem}
\begin{proof}
  By hypothesis the image of $J$ in $\out(N)$ induced by conjugation, which is contained in $\calO(G/N)$, is finite. We may therefore take $K'$ to be a finite index subgroup of $J$ that lies inside the kernel $J\to \out(N).$ Considering the quotient map $NJ \to NJ/N \cong J$, it is clear that $NK' \leq NJ$ is a finite index subgroup.
  
  Pick a finite basis $\{k'_1,\ldots,k'_n\}$ of $K'$. Triviality of the image of $K'$ in $\out(N)$ tells us that for each $k'_i,i=1,\ldots,n$ there exists $h_i \in N$ such that for every $h \in N$ \[k'_i h {k'_i}^{-1} = h_i h {h_i}^{-1}.\]  We construct a new subgroup by modify the generating set, replacing $k'_i$ with $k'_ih_i^{-1} = k_i$, and taking $K' = \langle k_1,\ldots, k_n\rangle$. We have the equality $NK'=NK$ but now $K$ centralizes $N$. To see that $K\cap N = \{1\}$ take an arbitrary product such that\[
    k_{i_1}^{n_1}\cdots k_{i_l}^{n_l} \in N
  \] Since $N$ is normal we can expand $k_{i_j}=k'_{i_j}h_{i_j}$ and rewrite the product as\[
    k_{i_1}^{n_1}\cdots k_{i_l}^{n_l} =  (k_{i_1}')^{n_1}\cdots (k_{i_l}')^{n_l}h.
 \] $K'\cap N=\{1\}$, we must have that $(k_{i_1}')^{n_1}\cdots (k_{i_l}')^{n_l}$ is trivial and since $\{k'_1,\ldots,k'_n\}$ is a basis, this means that the original product $k_{i_1}^{n_1}\cdots k_{i_l}^{n_l}$ is trivial. Therefore, $NK = N\times K$ and the result follows.
\end{proof}

\begin{remark}
    Lemma \ref{lem:untwisting} does not hold if we drop the hypothesis that $J$ is free and allow $N$ to have non-trivial center. A counterexample is the group $N\ltimes \mathbb Z^2$ where $N=\langle x,y\rangle$ is free nilpotent of of rank 2 and class 3, and $\bbZ^2 = \langle a,b\rangle$ acts as $ana^{-1}=xnx^{-1}$ and $bnb^{-1} = yny^{-1}$ for all $n\in N$.
\end{remark}

The author thanks Ashot Minasyan for suggesting the following formulation of an intermediate step in an earlier proof of Theorem \ref{thm:converse} that is useful in its own right. The difficulties in this proof of this lemma comes from the fact that the centralizer $Z(N)$ need not be finitely generated and may contain torsion.

\begin{thm}[Normal virtual complement lemma]\label{thm:normal-in-normal}
  Let $N\leq G$ be a normal subgroup, let $K \leq G$ be a finitely generated group such that $NK = N\times K$, and $NK\leq G$ is a finite index normal subgroup. Then there is a subgroup $K_\Gamma \leq NK$ such that $K_\Gamma \leq G$ is normal in $G$ and $NK_\Gamma\leq G$ has finite index. Furthermore we have \[
    NK_\Gamma = N\times K_\Gamma
    \] and the composition \[
    K_\Gamma\stackrel\sim\to NK_\Gamma/N \leq NK/N \stackrel\sim\to K
    \] naturally embeds $K_\Gamma$ as finite index subgroup of $K$.
 \end{thm}

 \begin{proof}
   Since $NK \leq G$ is finite index and normal the image $\Gamma = \calO(G/NK) \leq \out(NK)$ is finite. We denote the conjugation maps by $G\ni g\mapsto \phi_g \in \aut(NK)$ and we denote by $\Phi_g \in \Gamma$ the image of the automorphism $\phi_g$ in $\out(NK)$.
   Since $N$ is normal in $G$, these automorphisms of $NK$ leave $N$ invariant, thus for any $g \in G$ and $k \in K$ we have \[
    \phi_g(k) = g k g^{-1} = k_gh_{k,g}
    \] for some $h_{k,g}\in N$ and $k_g\in K$. Since $g k g^{-1}$ must still commute with every element in $N$ we have that $h_{k,g} \in Z(N)$, i.e. it must be in the center of $N$. If $N$ has trivial center then $gkg^{-1} = k_g \in K \leq NK$ for all $k\in K, g\in G$ so $K$ is normal in $G$. In this case we set $K = K_\Gamma$ and the claim holds.

    Since $Z(N)$ is characteristic in $N$, the hypotheses imply that \[
        Z(N)K = Z(N)\times K
      \] is a normal subgroup of $G$. To continue, we must focus on the case where is some $k\in K$ and some $g \in G$ such that\[g k g^{-1} = \phi_g(k) = k_gh_{k,g} \not\in K.\]
      
    We will perform a sequence of modifications to $K$ to eventually get a normal subgroup $K_\Gamma$ of $G$  with the desired properties. On the one hand $K$ is finitely generated and $Z(N)$ is an abelian group, or equivalently a $\bbZ$-module. In particular, every automorphism of $Z(N)K$ descends to an automorphism of the abelianization \[
     \left(Z(N)K\right)_{ab}=Z(N)\times K_{ab} = M,
   \]
   where $K_{ab} = K/[K,K]$. For the $\bbZ$-module $M$ we will use additive notation and denote by $0$ the identity in $Z(N)$. We identify $K_{ab}$ with the submodule $\{0\}\times K_{ab}\leq M$.

   For any element of $g\in Z(N)K$ we will denote by $\bar g$ its image in $M$. Since $\aut(M)=\out(M)$ we actually have a natural action of the finite group $\Gamma$ on $M$, making $M$ into a $\bbZ\Gamma$-module. This structure is capable of detecting the non-normality of $K$ in $G$. Indeed, if $\phi_g(k) = k_gh_{k,g}\not\in K$ then $\Phi_g$ acts non-trivially on $M$ since \[
    \Phi_g \cdot(0,\bar k) = (\underbrace{h_{k,g}}_{\neq 0},\bar k_g) \in \left(Z(N) \times K_{ab}\right)\setminus \left(\{0\}\times K_{ab}\right).
    \] 

    Let $\rho_0:Z(N)\times K_{ab} \twoheadrightarrow Z(N)$ be the canonical projection onto the first factor, by hypothesis $\rho_0$ is not $\Gamma$-invariant since its kernel $K_{ab}$ isn't. We now use a classical trick from the representation theory of finite groups (see \cite[\S 1.2]{fulton_representation_2004}). Let\[
    \rho(v) = \sum_{\gamma \in \Gamma} \gamma\cdot \rho_0(\gamma^{-1}\cdot v).
    \] By hypothesis, $Z(N)$ is $\Gamma$-invariant so $\rho: Z(N)\times K_{ab} \to Z(N)$ is a $\Gamma$-equivariant $\bbZ$-linear map to $Z(N)$ that restricts on $Z(N)$ to scalar multiplication by $|\Gamma|$. In particular $\rho$ is injective on set of infinite order elements of $Z(N)$ and we have\[Z(N)\cap\ker(\rho) = \{h \in Z(N): |\Gamma|h=0\}.\] $\Gamma$-equivariance of $\rho$ implies that $\ker(\rho)$ is also $\Gamma$ invariant. The goal is now to modify $K$ so that it maps to $\ker(\rho)$ and to get ``something like'' $M=Z(N)\times \ker(\rho)$. 

    There are two issues to overcome that stem from the fact that $Z(N)$ could be any abelian group. The first is that it is not clear from this construction that $M \leq Z(N)+\ker(\rho)$. The second issue is that even if $M = Z(N)+\ker(\rho)$, because the sumands may have non-trivial intersection, we could have $M\not\simeq Z(N)\times\ker(\pi)$. We will overcome these issues by passing to submodules.  
    Consider first 
    \[|\Gamma|M = \{\underbrace{m+\cdots+m}_{|\Gamma|\textrm{~terms}} \in M :m\in M\}.\] In this case, recalling that $\rho$ restricted to $Z(N)$ is scalar multiplication by $|\Gamma|$, since $\rho(|\Gamma|M) \leq \rho(Z(N))$ we can deduce the inclusion \begin{equation}\label{eqn:inclusion}
    |\Gamma|M\leq Z(N)+\ker(\rho)\end{equation} as follows: let $|\Gamma|m \in |\Gamma|M$ be arbitrary. Then $\rho(|\Gamma|m)=|\Gamma|\rho(m) = |\Gamma|h_m$ for some $h_m \in Z(N)$. Since $\rho(|\Gamma|m-h_m)=0$ we deduce that there is some $z\in \ker(\rho)$ such that $|\Gamma|m = h_m+z$ as required.

  Now $K_{ab}\cap |\Gamma|M = |\Gamma|K_{ab}$ is a finite index normal subgroup of $K_{ab}$ and so we define $K'_\Gamma \leq K$ to be its preimage. As a finite index subgroup of the finitely generated group $K$, $K'_{\Gamma}$ has a finite generating set $\{\kappa_1',\ldots,\kappa_m'\}$ and, since it's normal in $K$, its image in $K_\Gamma'/N \leq G/N$ is normal so $NK'_\Gamma$ remains normal in $G$ and we still have $NK_\Gamma =N\times K_\Gamma'\{1\}$. We denote by $\overline{K_\Gamma'}$ the image of $K'_\Gamma$ in the abelianization $M$. We note that $\overline{K_\Gamma'}$ will not necessarily be isomorphic to the abelianization of $K'_\Gamma$ as it will be a subgroup of $K'_{ab}\leq M$. $\overline{K'_\Gamma}$ lies in $Z(N)+\ker(\rho)$ so we have
    decompositions\[
        \overline{\kappa_i'} = h_i'+z_i', i=1,\ldots, m
      \] with $h_i' \in Z(N), z_i' \in \ker(\rho)$.

      In order to control torsion we will replace $M$ by a finitely generated module as follows: let 
    \begin{eqnarray*}
        M' &=& \mathrm{span}_{\bbZ\Gamma}\{h_1',\ldots,h_m',z_1',\ldots,z_m'\} \geq \overline{K_\Gamma'} \\
        M'' &=&  \mathrm{span}_{\bbZ\Gamma}M' \cup \rho(K'_{ab}).
    \end{eqnarray*}
    $M''$ is a finitely generated $\bbZ\Gamma$-module and, since $\Gamma$ is finite, it is also finitely generated as a $\bbZ$-module. It follows by the basic theory of finitely generated $\bbZ$-modules that there is a period $p \in \bbN$ such that $pM''$ is torsion-free.

    We repeat the construction above to obtain the finite index subgroup $K''_\Gamma \leq K'_\Gamma$ which is the preimage of \[p \overline{K_\Gamma'} = (p|\Gamma|)K_{ab}\leq pM''.\] Note that $K''_\Gamma \leq K$ is again normal and finite index. Let $k \in p\overline{K'_\Gamma}=\overline{K''_\Gamma}$. On the one hand we have\[
    \rho(k) = \rho(p|\Gamma|m) = p|\Gamma|\rho(m)=p|\Gamma|h
    \] for some $m\in K'_{ab}$ and some $h\in Z(N)\cap\rho(K'_{ab}) \leq Z(N)\cap M''$. In particular we find that \[
        k-ph =z\in \ker(\rho)
    \] and since $k,ph \in pM''$ we have that $z \in pM''$. From this we deduce \[
    \overline{K''_\Gamma}=(p|\Gamma|)K_{ab} \leq \left(Z(N)\cap pM''\right) + \left(\ker(\rho)\cap pM''\right).
    \] By eliminating torsion we have ensured that $\left(Z(N)\cap pM''\right)$ and $\left(\ker(\rho)\cap pM''\right)$ have trivial intersection. This means that if we take a generating set $\{\kappa''_1,\ldots,\kappa''_{r}\}$ of $K''_\Gamma$ and consider their images $\overline{\kappa''_i} \in (p|\Gamma|)K_{ab}\leq pM'', i=1,\ldots,r$ then for each $i$ we have a unique decomposition\[
        \overline{\kappa''_i} = h_i+z_i,
    \] with $h_i\in Z(N)\cap pM''$ and $z_i \in \ker(\rho)\cap pM''$.

    We finally take $K_\Gamma\leq Z(N)K_\Gamma''$ as the group with the generating set $\{\kappa_1,\ldots,\kappa_r\}$ where $\kappa_i = \kappa''_ih_{i}^{-1}$ and $\kappa''_i$ was a generator of $K_\Gamma''$ given above. On the one hand, we still have $Z(N)K''_\Gamma = Z(N)K_\Gamma$
    . The abelianization map for $Z(H)K'$ restricted to $K_\Gamma$ still maps $K_\Gamma$ to $M''$ but now $K_\Gamma$ maps entirely to $\ker(\rho)\cap pM''$, which is $\Gamma$-invariant. 
    
    We will now show that $K_\Gamma$ is normal in $G$. Suppose otherwise, then there is some basis element $\kappa_i$ and some $g \in G$ with $g\kappa_i g^{-1} =\phi_g(\kappa_i) = k_{g,i}h_{g_i}$ with $k_{g,i}\in K_\Gamma, h_{g,i}\in Z(N)\setminus\{1\}$. Then mapping to $M$ we see that the image $\overline{\kappa_i} = z_i \in \ker(\rho)\cap pM''$ and that $\overline{\phi_g(\kappa_i)} = \Phi_g \cdot z_i = \overline{k_{g,i}}+h_{g,i}$. On the one hand $\ker(\rho)\cap pM''$ is $\Gamma$-invariant so $\overline{k_{g,i}}+h_{g,i} \in \ker(\rho)\cap pM''$. On the other hand, since $\overline{K_\Gamma} \leq \ker(\rho)\cap pM''$, we conclude that $h_{g,i} \in \ker(\rho)\cap pM''$. Now $Z(N)\cap \ker(\rho)$ consists of torsion elements, but $pM''$ is torsion-free which implies that $h_{g,i}=0$ contradicting the assumption that it is non-trivial.

    Thus, $K_\Gamma$ is a normal subgroup of $G$. Furthermore noting that the generators of $K_\Gamma$ were obtained by multiplying elements of $K''_\Gamma \leq K$ by elements of $Z(N)$ we have that $NK_\Gamma = NK''_\Gamma$. It remains to show that $K_\Gamma \cap N = \{1\}.$ First note that $K_\Gamma \leq C_G(N)$, the centralizer of $N$, thus $K_\Gamma \cap N \leq Z(N)$. For the image in the abelian group $M$ we have $\overline{K_\Gamma}\cap Z(N)=\{0\}$ therefore $K_\Gamma \cap Z(N) \leq [K,K]$, which is the kernel of the map $Z(N)K\twoheadrightarrow M$, and since $[K,K] \cap Z(N) = \{1\}$ we conclude $ K_\Gamma \cap N = K_\Gamma \cap Z(N) = \{1\}$. Therefore \[
      NK_\Gamma = N\times K_\Gamma,
    \] as required. In particular the natural embedding of $K_\Gamma$ as finite index normal subgroup of $K$ follows.
 \end{proof}

\begin{thm}\label{thm:converse}
  Suppose a commensuration $(i_1,i_2)$ over a group $H$ extends a normal commensuration $(h_1,h_2)$ over some group $N$. If $(h_1,h_2)$ is out-finite then  $(i_1,i_2)$ admits a completion.
\end{thm}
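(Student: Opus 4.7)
The plan is to produce the completion $K$ as a quotient $\Gamma/L$ of the amalgamated free product $\Gamma := G_1 *_H G_2$ by a normal subgroup $L$ that intersects neither $G_1$ nor $G_2$ and such that $LN$ has finite index in $\Gamma$; any such $L$ then exhibits $G_1$ and $G_2$ as finite-index subgroups of $\Gamma/L$ via the canonical inclusions.

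First, since $N$ is normal in both factors and contained in $H$, it is normal in $\Gamma$, and the quotient $Q := \Gamma/N$ is the amalgamated product $(G_1/h_1(N)) *_{H/N} (G_2/h_2(N))$ of finite groups; hence $Q$ is virtually free and subgroup separable. Out-finiteness gives that the outer conjugation map $\bar{\rho} : Q \to \out(N)$ has finite-index kernel $M$. Using LERF of $Q$ together with Bass-Serre theory, I would find a finite-index normal subgroup $\tilde L \triangleleft Q$ satisfying (a) $\tilde L \subseteq M$, (b) $\tilde L \cap (G_i/N) = 1$ for $i=1,2$, and (c) the images of $G_1/N$ and $G_2/N$ in $F := Q/\tilde L$ intersect in exactly $H/N$. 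Condition (b) combined with normality of $\tilde L$ forces $\tilde L$ to avoid every conjugate of every finite subgroup of $Q$, so $\tilde L$ is torsion-free virtually free, and therefore free.

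Let $\Gamma_0 \triangleleft \Gamma$ be the preimage of $\tilde L$. Because $\tilde L \subseteq M$, every element of $\Gamma_0$ acts on $N$ by an inner automorphism, so $\Gamma_0 = N \cdot \tilde C$ with $\tilde C := C_\Gamma(N) \cap \Gamma_0$, and the sequence $1 \to Z(N) \to \tilde C \to \tilde L \to 1$ is a central extension of the free group $\tilde L$ by the abelian group $Z(N)$. Such an extension splits, yielding a complement $L \subseteq \tilde C$ with $L \cdot Z(N) = \tilde C$ and $L \cap Z(N) = 1$. Because $L$ centralizes $N$, one has the direct product $\Gamma_0 = N \times L$; then $L \cap G_i \subseteq L \cap N = 1$ and $LN = \Gamma_0$ is of finite index in $\Gamma$, so if $L$ is additionally normal in $\Gamma$ the quotient $K := \Gamma/L$ is the required completion.

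The main obstacle is arranging $L$ to be $\Gamma$-normal, not merely $\Gamma_0$-normal. The set of valid complements is a torsor over the $\bbZ[F]$-module $A := \mathrm{Hom}(\tilde L^{\mathrm{ab}}, Z(N))$, and the failure of $\Gamma$-normality of a chosen $L$ is recorded by a $1$-cocycle with values in $A$, defining a class in $H^1(F, A)$. The elementary $\bbZ$-module theory mentioned in the introduction enters here: since we are free to replace $\tilde L$ by any smaller finite-index normal subgroup of $Q$ still satisfying (a)-(c), I would refine $\tilde L$ to arrange that $\tilde L^{\mathrm{ab}}$ carries the structure of a free $\bbZ[F]$-module, making $A$ coinduced and hence cohomologically trivial in positive degrees by Shapiro's lemma; equivalently, since $H^1(F, A)$ is annihilated by $|F|$, an averaging/divisibility argument inside the abelian group $A$ can realize the obstruction cocycle as a coboundary. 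Either route kills the class in $H^1(F, A)$, produces a $\Gamma$-normal complement $L$, and completes the proof.
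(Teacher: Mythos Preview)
Your overall architecture matches the paper's: form $\Gamma=G_1*_HG_2$, use normality of $N$ to pass to the virtually free quotient $Q=\Gamma/N$, find a free finite-index normal subgroup with trivial image in $\out(N)$, lift it to a direct complement of $N$ inside $\Gamma$, and quotient. The cohomological reformulation of the last step is also correct: the centralizing complements form a torsor over $A=\Hom(\tilde L^{\mathrm{ab}},Z(N))$ and the obstruction to a $\Gamma$-invariant one is a class in $H^1(F,A)$.

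The gap is in how you propose to kill that class. Your concrete suggestion---refine $\tilde L$ so that $\tilde L^{\mathrm{ab}}$ is free as a $\bbZ[F]$-module---does not work. First, shrinking $\tilde L$ enlarges $F=Q/\tilde L$, so the ring you want freeness over is a moving target. More seriously, for $Q=D_\infty=(\bbZ/2)*(\bbZ/2)$ every finite-index normal free subgroup is $\langle(ab)^n\rangle\cong\bbZ$, and $\bbZ$ is never free over $\bbZ[D_n]$; so no refinement has the asserted property, and for suitable $Z(N)$ the group $H^1(F,A)$ is genuinely nonzero. Thus Shapiro's lemma cannot be invoked.

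What the paper does (and what your phrase ``averaging/divisibility'' gestures at, without doing) is a direct argument on $M=Z(N)\times K'_{\mathrm{ab}}$: average the naive projection $M\to Z(N)$ over the finite image $\Gamma$ in $\out(NK')$ to obtain a $\Gamma$-equivariant map $\rho$ that is multiplication by $|\Gamma|$ on $Z(N)$. Since $\rho$ is not a projection, one passes to the characteristic finite-index subgroup of $K'$ lying over $|\Gamma|K'_{\mathrm{ab}}$ to land inside $Z(N)+\ker\rho$, and then to a further finite-index subgroup to eliminate the $|\Gamma|$-torsion of $Z(N)$ so that $Z(N)\cap\ker\rho$ becomes trivial on the relevant finitely generated submodule. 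Only after both reductions can the basis be shifted by elements of $Z(N)$ into the $\Gamma$-invariant $\ker\rho$, producing the normal complement. These torsion and divisibility manoeuvres are exactly the ``elementary $\bbZ$-module theory'' alluded to, and they are where the real content of the argument lies.

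(Minor point: your condition (c) on the intersection of the images of $G_i/N$ in $F$ is never used and can be omitted.)
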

 
\begin{proof}
Without loss of generality we can assume the commensuration is non-trivial, otherwise the result holds immediately. By hypothesis we have the following diagram of finite index inclusions with the image of $N$ being normal in all codomains.
\[
  \begin{tikzcd}
    &N \arrow[hookrightarrow]{dr}{h_2} \arrow[d,phantom,sloped,"\leq"]& \\
    G_1\arrow[hookleftarrow]{ru}{h_1}\arrow[hookleftarrow]{r}{i_1}  &H \arrow[hookrightarrow]{r}{i_2}  & G_2\\
  \end{tikzcd}
\] 
We form the associated amalgamated free product $G_1*_HG_2$ and we consider $N$ and $H$ as subgroups of $G_1$ and $G_2$ and $G_1,G_2$ as subgroups of $G_1*_HG_2$. By hypothesis, $N$ is a normal subgroup of this amalgamated free product and the canonical quotient is the free product\[
    (G_1*_HG_2)/N \simeq (G_1/N) *_{H/N} (G_2/N),    
    \] which, as an essential amalgamated product of finite groups, is virtually free by \cite[Theorem 1]{karrass_finite_1973}. This means that there is a finite index normal free subgroup $J \leq (G_1/N) *_{H/N} (G_2/N)$. Consider the commuting following commutative diagram of short exact sequences
    \begin{equation}\label{eqn:diag1}
        \begin{tikzcd}
            & & & 1\arrow{d} &\\
           1\arrow{r} & H \arrow[hook]{r}& \pi^{-1}(J) \arrow[two heads]{r}{\pi|_{\pi^{-1}(J)}} \arrow[phantom,sloped]{d}{\leq} & J \arrow{r}\arrow[hook]{d}& 1\\
           1 \arrow{r} & H \arrow[hook]{r} & G_1*_HG_2 \arrow[two heads]{r}{\pi}& (G_1/N)*_{H/N}(G_2/N) \arrow{r}1\arrow[two heads]{d}& 1\\
           & & & F\arrow{d}&\\
           & & & 1&\\
        \end{tikzcd}
    \end{equation}
    where hooked inclusion arrows ($\hookrightarrow$) are inclusions of normal subgroups, two headed arrows ($\twoheadrightarrow$) are canonical quotient maps, and $F$ is a finite group. Now since $J$ is a free group, the top row of the diagram splits and we have the semidirect product\[
        \pi^{-1}(J) = N\hat J \simeq N \rtimes_{\phi} \hat J   
      \] where $\hat J$ is an isomorphic lift of $J$ to $\pi^{-1}(J) \leq G_1*_HG_2$ and $\phi: \hat J \to \aut(N)$ is the homomorphism induced by conjugation of $\hat J$ on $H$. Now by hypothesis \[\calO((G_1*_HG_2)/N) = \langle\calO(G_1/N),\calO(G_2/N)\rangle \subset \out(N)\] is finite, so Lemma \ref{lem:untwisting} applies and we can find subgroup $K' \in N\hat J$ such that $NK' = N\times K'$ and such that $K'$ maps isomorphically via $\pi$ to a finite index subgroup of $J$. Now there is a finite index characteristic subgroup $\overline K \leq J$ such that $\overline K \leq \pi(K')$. Let $K = {\pi|_{K'}}^{-1}(\overline K)\leq K'$, then $KN = N\times K$ is the $\pi$-preimage of the normal subgroup $\overline K$ and is therefore a normal subgroup of $G_1*_HG_2$.

      This, however, is not enough to guarantee that $K$ is normal in $G_1*_HG_2$. At this point, however, we can apply Theorem \ref{thm:normal-in-normal} which gives a normal subgroup $K_\Gamma \leq NK$ that is also normal in $G_1*_HG_2$ and that maps via $\pi$ to finite index subgroup of $\pi(K)\leq J$. Since $K_\Gamma$ is normal in $G_1*_HG_2$, its image is normal in the quotient $(G_1/N)*_{H/N}(G_2/N)$. 

    We will now show that the desired completion is
\begin{equation}\label{eqn:second-completion}
\begin{tikzcd}
    &H \arrow[hookrightarrow]{dr}{i_2}&\\
    G_1\arrow[hookleftarrow]{ru}{i_1}  \arrow[hookrightarrow]{dr}{j_1}&&G_2\\
    &(G_1*_{H}G_2)/K_\Gamma \arrow[hookleftarrow]{ru}{j_2} &\\
\end{tikzcd}.
\end{equation}
First, looking at diagram \eqref{eqn:diag1}, since $\pi(K_\Gamma) \leq J$ which is torsion-free, its image has trivial intersection with the images of $G_1$ and $G_2$. So, since $K_\Gamma\cap N=\{1\}$, we have that $G_i \cap K_\Gamma = \{1\}, i=1,2$. Thus $G_1,G_2$ are mapped injectively via the canonical quotient map $G_1*_HG_2 \twoheadrightarrow (G_1*_HG_2)/K_\Gamma$. We can therefore identify $N,H,G_1,G_2$ with their images in $(G_1*_HG_2)/K_\Gamma$. Finally since $\pi(K_\Gamma)$ is a finite index normal subgroup of ${(G_1/N)*_{H/N}(G_2/N)}$ if we make an analogue of diagram \eqref{eqn:diag1} with $\pi(K_\Gamma)$ in place of $J$. It is immediate that \[
    ((G_1*_{H}G_2)/K_\Gamma)/N\simeq (G_1*_{H}G_2)/\pi^{-1}(\pi(K_\Gamma)) \simeq ((G_1/N)*_{H/N}(G_2/N))/\pi(K_\Gamma)
    \]  is finite. So $N$, and therefore also $G_1,G_2$, are finite index subgroups of $(G_1*_{H}G_2)/K_\Gamma$. Thus $(j_1,j_2)$ is the desired co-commensuration and the result follows.    
  \end{proof}

Next, we answer \cite[Question 4.9]{minasyan_virtual_2025}.

  \begin{proof}[Proof of Theorem \ref{thm:normal_virt_comp}]
    Since $N\lhd G$ is a virtual retract of $G$, there is a finite index subgroup $G'$ in $G$  such that there is a retraction $G' \twoheadrightarrow N$. The subgroup\[
      G''=\bigcap_{g\in G} g^{-1}(G' \cap H)g
    \] is a finite index normal subgroup of $G$ containing $N$ and contained in $G' \cap H$; in particular, it also retracts onto $N$. Thus there is a normal subgroup $K \lhd G''$ such that $K \cap N=\{1\}$ and $G''=KN$. Since $K$ and $N$ are both normal in $G''$ and intersect trivially, $G''$ is the internal direct product $K\times N$. 
    
    Note that $K \cong KN/N$ embeds as a finite index subgroup in $G/N$, hence $K$ is finitely generated. Therefore,   
we can apply Theorem \ref{thm:normal-in-normal} to obtain a finitely generated subgroup $M\leq KN$ where $M$ is normal in $G$, $MN = N\times K_\Gamma$ and $K_\Gamma N \leq G$ is a finite index subgroup. Thus $K_\Gamma$ is normal virtual complement of the normal virtual retract $N$ with the desired properties.
  \end{proof}


      

\appendix
\section{Normal virtual complements to normal virtual retracts}
\label{sec:appendix}

\smallskip
\begin{center}by \textsc{Ashot Minasyan}\end{center}
\medskip
In this appendix we give an alternative proof of Theorem~\ref{thm:normal-in-normal} (see Proposition~\ref{prop:K'}), which quickly implies the statement of Theorem~\ref{thm:normal_virt_comp}, as seen  in Section~\ref{sec:nec-suff}. Our argument 
uses a lemma about finitely generated virtually abelian groups from \cite{Min-virt_retr_props}. We then discuss some applications of Theorem~\ref{thm:normal_virt_comp} to commensurating graphs of groups. 

\subsection{A shorter proof of Theorem~\ref{thm:normal-in-normal}.}
\begin{prop}\label{prop:K'}
Suppose a group $G$ has a finite index normal subgroup $G'$ such that
\begin{equation}\label{eq:G'}
G' = N \times K,    
\end{equation}
where $N \lhd G$ and $K/[K,K]$ is finitely generated. Then there exists $K' \leqslant G'$ such that $K' \lhd G$, $[K,K] \subseteq K'$ and $N \times K'$ has finite index in $G$. 
\end{prop}

Here and below all the direct products of groups (and direct sums of modules) are internal; e.g., by $G'=N \times K$ we mean that $N, K \lhd G'$ and $N \cap K = \{1\}$.

\begin{lem}\label{lem:module_lemma}
Let $M$ be a module over a finite group $S$, with a submodule $Z$, such that
there is a finitely generated subgroup $L \leqslant M$ (not necessarily $S$-invariant) such that $M=Z+L$ and $Z \cap L=\{0\}$. Then there is a finitely generated $S$-submodule $L' \subseteq M$ such that $L'$ is a virtual complement to $Z$ (i.e., $Z \oplus L'$ has finite index in $M$).
\end{lem}

\begin{proof}
Note that
$A = \sum_{s \in S} s.L$
is a finitely generated submodule of $M$, and $B=Z \cap A$ is an $S$-submodule of $A$. For a finite group $S$, every $S$-submodule $B$ in a finitely generated $S$-module $A$ has a submodule that is a  virtual complement: 
see \cite[Lemma~4.2]{Min-virt_retr_props} and its proof (to apply the statement of the lemma directly, one can note that the semidirect product $A \rtimes S$ is a finitely generated virtually abelian group and $B$ is a normal subgroup, hence, by \cite[Lemma~4.2]{Min-virt_retr_props}, $B$ has a normal virtual complement which will be an $S$-submodule of $A$). Let $L'$ be this submodule of $A$, so that
$B \oplus L'$ has finite index in $A$.
Then $Z \cap L' = \{0\}$ and $Z + L'=Z \oplus L'$ has finite index in $Z + A = M$, so $L'$ is an $S$-submodule of $M$ that is a virtual complement to $Z$ in $M$. 
\end{proof}

\begin{proof}[Proof of Proposition~\ref{prop:K'}] Let $Z=Z(N) \lhd G$ denote the center of $N$. Equation \eqref{eq:G'} immediately implies that
\[ZK= C_G(N) \cap G'.\] And since the centralizer $C_G(N)$ and $G'$ are both normal in $G$, we deduce that $ZK=Z\times K \lhd G$. It follows that the derived subgroup $[ZK,ZK]=[K,K]$ is normal in $G$, and we can work in the $G$-module
\[
M = (ZK)/[ZK,ZK]= Z \times K/[K,K].
\]
Since both $N$ and $K$ act trivially on $M$, this is actually an $S$-module, where  $S=G/(NK) = G/G'$ is a finite group. By the assumptions, $L = K/[K,K]$ is a finitely generated complement to $Z$ in $M$ (but $L$ is not necessarily $S$-invariant), so we can apply Lemma~\ref{lem:module_lemma} to find an $S$-submodule $L'$ such that $Z \oplus L'$ has finite index in $M$. Going back to treating $M$ as a $G$-module, we see that $L'$ is normal in $G/[K,K]$, hence its full preimage $K'$, under the homomorphism $G \to G/[K,K]$,
is normal in $G$. Observe that
\begin{enumerate}
    \item[(i)] $ZK'$ has finite index in $ZK$ (because $ZK'$ is the full preimage of $Z + L'$, which has finite index in $M$), and
    \item[(ii)] $Z \cap K' = \{1\}$ in $G$ (because $Z \cap [K,K] = \{1\}$ in $G$ and $Z \cap L' = \{0\}$ in $M$).
\end{enumerate}

From (i), it follows that $ZK'$ contains a finite index subgroup of $K$. Hence
$NK' = NZK'$
has finite index in $NK = G'$. Moreover, $K' \subseteq ZK$, so it centralizes $N$. Note that
\[
N \cap ZK = Z(N \cap K) = Z, \text{ so }
N \cap K' = Z \cap K' = \{1\},
\]
by (ii). Therefore,
$NK' = N \times K'$,
which completes the proof of the proposition. 
\end{proof}

Note that Proposition~\ref{prop:K'} is slightly stronger than Theorem~\ref{thm:normal-in-normal}, because the proposition only requires the abelianization $K/[K,K]$ to be finitely generated.

\subsection{Applications to commensurating graphs of groups}
Below we will be using the notation for graphs of groups from \cite[Subsection~2.2]{minasyan_virtual_2025}.

\begin{defn}\label{def:comm_graphs_of_gps}
We will say that a finite graph of groups     $(\mathcal{G},\Gamma)$ is \emph{commensurating} if the images of every edge group in its two vertex groups have finite indices. More precisely, for each $e \in E\Gamma$, we require that $|G_{\alpha(e)}:\alpha_e(G_e)|<\infty$. 
\end{defn}

Specific examples of commensurating graphs of groups are free products with amalgamation $A*_C B$, where the amalgamated subgroup $C$ has finite index in the factors $A$ and $B$,  and HNN-extensions $A*_{B^t=C}$, where the associated subgroups $B$, $C$ have finite indices in the base groups $A$. Another well-known class of examples is given by finite graphs of groups where all vertex and edge groups are infinite cyclic and whose fundamental groups are often called \emph{generalized Baumslag-Solitar groups}. Commensurating graphs of groups have been originally studied by Bass and Kulkarni in \cite{bass_uniform_1990}, who called them \emph{graphs of groups of finite index}.

\begin{remark} \label{rem:BS-tree}
By Bass-Serre Theory, a group $G$ splits as the fundamental group of a commensurating graphs of groups if and only if $G$ admits a cocompact action on a locally finite  simplicial tree $T$ without edge inversions.    
\end{remark}

\begin{remark}
 Suppose that $(\mathcal{G},\Gamma)$ is a commensurating graph of groups with fundamental group $G$. Definition~\ref{def:comm_graphs_of_gps} easily implies that for any two vertices $u,v \in V\Gamma$, the intersection $G_u \cap G_v$ has finite index in both $G_u$ and $G_v$. Moreover,  if $H \leqslant G$ is the image of any vertex or edge group in $G$ then  $G$ \emph{commensurates} $H$, that is 
\[|H:(H \cap gHg^{-1})|<\infty~\text{ and }~|gHg^{-1}:(H \cap gHg^{-1})|<\infty,~\text{ for all }g \in G.\]
\end{remark}

\begin{defn}\label{def:tame}
A commensurating graph of groups $(\mathcal{G},\Gamma)$ with fundamental group $G$ will be called \emph{tame} (or, more specifically, \emph{tame over $N$})  if there exists a normal subgroup $N \lhd G$ such that the following conditions are satisfied:
\begin{itemize}
    \item[(i)] $N$ is contained as a finite index subgroup in $\alpha_e(G_e)$ in $G$, for each $e \in E\Gamma$;
    \item[(ii)] the natural map $\mathcal{O}: G \to \out(N)$ has finite image.
\end{itemize}
\end{defn}

Examples of tame commensurating graphs of groups are given by \emph{unimodular} Baumslag-Solitar groups (see \cite[Section~2]{Levitt-GBS}), and by commensurating HNN-extensions of $\mathbb{Z}^n$ corresponding to finite order matrices from $\mathrm{GL}(n,\mathbb{Q})$, see \cite{Leary-Min}. 

Observe that the subgroup $N$ in Definition~\ref{def:tame} will have finite index in every vertex group $G_v$, $v \in V\Gamma$, because the graph of groups is commensurating.

\begin{remark} Let $(\mathcal{G},\Gamma)$ be a commensurating graph of groups with fundamental group $G$, and let $T$ be the corresponding locally finite Bass-Serre tree. The action of $G$ on $T$ gives rise to a homomorphism
\[\varphi:G \to \mathrm{Aut}(T),\]
where $\mathrm{Aut}(T)$ is the automorphism group of $T$. Since $T$ is locally finite, $\mathrm{Aut}(T)$ can be naturally topologized, giving it a structure of a locally compact group (see, for example, \cite[Section~3]{bass_uniform_1990}).
The existence of a normal subgroup $N \lhd G$ satisfying condition (i) from Definition~\ref{def:tame} is equivalent to the condition that the image $\varphi(G)$ is a discrete subgroup of $\mathrm{Aut}(T)$ in this topology. The latter amounts to saying that for every vertex $v$ in $T$ the $\varphi(G)$-stabilizer of $v$ is finite (see \cite[Definitions~4.3]{bass_uniform_1990}), and we can define $N=\ker\varphi$.
\end{remark}

The next consequence of Theorem~\ref{thm:normal_virt_comp}, a strong generalization of \cite[Theorem~7.1]{ceccherini-silberstein_multipass_2015}, is the reason why we are interested in the tameness of commensurating graphs of groups.

\begin{cor}\label{cor:emb_of_G_in_virt_N_times_virt_free} Suppose that $G$ is the fundamental group of a commensurating graph of groups $(\mathcal{G},\Gamma)$. If this graph of groups is tame then 
\begin{itemize}
    \item there is a finitely generated free normal subgroup $M \lhd G$ such that $G_v \cap M=\{1\}$ and $|G:G_v M|<\infty$, for all $v \in V\Gamma$; in particular, each $G_v$ embeds as a finite index subgroup in $G/M$;
    \item $G$ embeds as a finite index subdirect product in $G/M \times F$, where $F$ is a finitely generated virtually free group.    
\end{itemize}
\end{cor}

\begin{proof}
Assume that $(\mathcal{G},\Gamma)$ is tame over some $N \lhd G$. Then $N$ will fix every edge of the Bass-Serre tree $T$ for $G$, hence it acts trivially on $T$. Since $N$ has finite index in each vertex group, $F=G/N$ acts on $T$ cocompactly with finite vertex stabilizers, so it is finitely generated and virtually free by the Structure Theorem of Bass-Serre Theory \cite[Section~I.5.4]{serre_trees_2002} and \cite[Theorem~1]{karrass_finite_1973}.

According to the assumptions,  $\mathcal{O}(G)$ has finite index in $\out(N)$, so we can apply \cite[Lemma~5.1]{minasyan_virtual_2025} to conclude that $N$ is a virtual retract of $G$. Now, since $G/N$ is virtually free, we can find a finite index subgroup $H \leqslant G$ such that $N \subseteq H$ and $H/N$ is free. By Theorem~\ref{thm:normal_virt_comp}, there is a finitely generated normal subgroup $M \lhd G$ such that $M \subseteq H$, $M \cap N =\{1\}$ and $|G:MN|<\infty$. It follows that $M$ injects into $H/N$, so it must be free. Since $|G_v:N|<\infty$, for each $v \in V\Gamma$, we see that $|G:G_vM|<\infty$ and $|G_v \cap M|=\{1\}$, as $M$ is torsion-free. The second statement of the corollary can now be deduced similarly to Corollary~\ref{cor:virt-retr-subnormal}.
\end{proof}

The following corollary generalizes Theorem~\ref{thm:main}. The fact that (a) implies (b) is given by Corollary~\ref{cor:emb_of_G_in_virt_N_times_virt_free} (take $Q=G/M$); the proof if the opposite implication is left to the reader.

\begin{cor} Let $G$ be the fundamental group of a commensurating graph of groups $(\mathcal{G},\Gamma)$. Then the following are equivalent:
\begin{itemize}
\item[(a)] $(\mathcal{G},\Gamma)$ is tame;
    \item[(b)] there exists a group $Q$ and a homomorphism $\psi:G \to Q$ such that $\psi$ is injective on each vertex group $G_v$ and $|Q:\psi(G_v)|<\infty$ for all (equivalently, for some) $v \in V\Gamma$.
\end{itemize}
\end{cor}

The next proposition is an immediate consequence of the second claim of Corollary~\ref{cor:emb_of_G_in_virt_N_times_virt_free}.

\begin{prop}\label{prop:prop_P} Let (P) be a property of groups and let $G$ be the fundamental group of a tame commensurating graph of groups $(\mathcal{G},\Gamma)$. Suppose that for some vertex $v \in V\Gamma$ the following conditions hold:
\begin{itemize}
    \item every finite index supergroup of $G_v$ satisfies (P);
    \item every finitely generated virtually free group has (P);
    \item (P) is stable under taking direct products and finite index subgroups.
\end{itemize}
Then $G$ has property (P).
\end{prop}

Proposition~\ref{prop:prop_P} is useful for properties (P) that are not always (or are unknown to be) stable  under commensurability. Hereditary conjugacy separability \cite[Theorem~1.3]{Min-cs_of_fibre_prods} and biautomaticity \cite[Open Question~4.1.5]{Word_processing} are two examples of such properties (recall that a group $G$ is \emph{hereditarily conjugacy separable} if every finite index subgroup is conjugacy separable). The claim about biautomaticity in the next corollary generalizes one direction of \cite[Theorem~8.3]{Leary-Min}.

\begin{cor} Let  $G$ be the fundamental group of a tame commensurating graph of groups with a vertex group $H$. Suppose that $H$ is (word) hyperbolic or finitely generated virtually abelian. Then $G$ is biautomatic. If, additionally, $H$ is virtually compact special (in the sense of Haglund and Wise \cite{HagWise}) then $G$ is hereditarily conjugacy separable.
\end{cor}

\begin{proof} It is well-known that finite index supergroups of hyperbolic groups are hyperbolic;  in particular, this applies to  finitely generated virtually free groups. Hyperbolic groups and finitely generated virtually abelian groups are biautomatic, and biautomaticity is preserved under taking direct products and finite index subgroups \cite{Word_processing}. Therefore, $G$ is biautomatic by Proposition~\ref{prop:prop_P}.

Now, assume that $H$ is virtually compact special, then the same is true for any finite index supergroup of $H$. Virtually compact special hyperbolic groups (which include finitely generated virtually free groups) are hereditarily conjugacy separable by \cite[Theorem~1.1]{Min-Zal}, and finitely generated  virtually abelian groups are hereditarily conjugacy separable by \cite[Proposition~1 in Section~4.C]{Segal-book}. Hereditary conjugacy separability is stable under direct products, by \cite[Lemma~7.3]{Mart-Min}, and under taking finite index subgroups, by definition. Thus Proposition~\ref{prop:prop_P} allows us to conclude that $G$ is hereditarily conjugacy separable.
\end{proof}

\bibliographystyle{alpha} \bibliography{cocommensurability}

\newcommand{\etalchar}[1]{$^{#1}$}
\begin{thebibliography}{CSCF{\etalchar{+}}15}

\bibitem[AG81]{angluin_finite_1981}
Dana Angluin and A.~Gardiner.
\newblock Finite common coverings of pairs of regular graphs.
\newblock {\em Journal of Combinatorial Theory. Series B}, 30(2):184--187,
  1981.

\bibitem[Bha94]{bhattacharjee_constructing_1994}
Meenaxi Bhattacharjee.
\newblock Constructing finitely presented infinite nearly simple groups.
\newblock {\em Communications in Algebra}, 22(11):4561--4589, January 1994.

\bibitem[BK90]{bass_uniform_1990}
Hyman Bass and Ravi Kulkarni.
\newblock Uniform {Tree} {Lattices}.
\newblock {\em Journal of the American Mathematical Society}, 3(4):843--902,
  1990.

\bibitem[BM00]{burger_lattices_2000}
Marc Burger and Shahar Mozes.
\newblock Lattices in product of trees.
\newblock {\em Publications mathématiques de l'IHÉS}, 92(1):151--194,
  December 2000.

\bibitem[Bow98]{bowditch_cut_1998}
Brian~H. Bowditch.
\newblock Cut points and canonical splittings of hyperbolic groups.
\newblock {\em Acta Mathematica}, 180(2):145--186, 1998.

\bibitem[BS22]{bridson_leightons_2022}
Martin~R. Bridson and Sam Shepherd.
\newblock Leighton's theorem : extensions, limitations and quasitrees.
\newblock {\em Algebraic \& Geometric Topology}, 22(2):881--917, 2022.

\bibitem[CSCF{\etalchar{+}}15]{ceccherini-silberstein_multipass_2015}
Tullio Ceccherini-Silberstein, Michel Coornaert, Francesca Fiorenzi, Paul~E.
  Schupp, and Nicholas~W.M. Touikan.
\newblock Multipass automata and group word problems.
\newblock {\em Theoretical Computer Science}, 600:19--33, October 2015.

\bibitem[DD89]{dicks_groups_1989}
Warren Dicks and M.~J. Dunwoody.
\newblock {\em Groups acting on graphs}, volume~17 of {\em Cambridge {Studies}
  in {Advanced} {Mathematics}}.
\newblock Cambridge University Press, Cambridge, 1989.

\bibitem[DK18]{drutu_geometric_2018}
Cornelia Druţu and Michael Kapovich.
\newblock {\em Geometric group theory}, volume~63 of {\em American
  {Mathematical} {Society} {Colloquium} {Publications}}.
\newblock American Mathematical Society, Providence, RI, 2018.

\bibitem[DK23]{dergacheva_small_2023}
Natalia~S. Dergacheva and Anton~A. Klyachko.
\newblock Small non-{Leighton} two-complexes.
\newblock {\em Mathematical Proceedings of the Cambridge Philosophical
  Society}, 174(2):385--391, 2023.

\bibitem[DK25]{dergacheva_tiny_2025}
Natalia~S. Dergacheva and Anton~A. Klyachko.
\newblock Tiny non-{Leighton} complexes.
\newblock {\em Geometriae Dedicata}, 219(3):Paper No. 48, 8, 2025.

\bibitem[ECH{\etalchar{+}}92]{Word_processing}
David B.~A. Epstein, James~W. Cannon, Derek~F. Holt, Silvio V.~F. Levy,
  Michael~S. Paterson, and William~P. Thurston.
\newblock {\em Word processing in groups}.
\newblock Jones and Bartlett Publishers, Boston, MA, 1992.

\bibitem[FH04]{fulton_representation_2004}
William Fulton and Joe Harris.
\newblock {\em Representation {Theory}}, volume 129 of {\em Graduate {Texts} in
  {Mathematics}}.
\newblock Springer New York, New York, NY, 2004.

\bibitem[HW08]{HagWise}
Fr\'{e}d\'{e}ric Haglund and Daniel~T. Wise.
\newblock Special cube complexes.
\newblock {\em Geom. Funct. Anal.}, 17(5):1551--1620, 2008.

\bibitem[JW09]{janzen_smallest_2009}
David Janzen and Daniel~T. Wise.
\newblock A smallest irreducible lattice in the product of trees.
\newblock {\em Algebraic \& Geometric Topology}, 9(4):2191--2201, 2009.

\bibitem[KPS73]{karrass_finite_1973}
A.~Karrass, A.~Pietrowski, and D.~Solitar.
\newblock Finite and infinite cyclic extensions of free groups.
\newblock {\em Journal of the Australian Mathematical Society}, 16(4):458--466,
  December 1973.
\newblock Publisher: Cambridge University Press.

\bibitem[Lei82]{leighton_finite_1982}
Frank~Thomson Leighton.
\newblock Finite common coverings of graphs.
\newblock {\em Journal of Combinatorial Theory. Series B}, 33(3):231--238,
  1982.

\bibitem[Lev05]{levitt_automorphisms_2005}
Gilbert Levitt.
\newblock Automorphisms of {Hyperbolic} {Groups} and {Graphs} of {Groups}.
\newblock {\em Geometriae Dedicata}, 114(1):49--70, August 2005.

\bibitem[Lev07]{Levitt-GBS}
Gilbert Levitt.
\newblock On the automorphism group of generalized {B}aumslag-{S}olitar groups.
\newblock {\em Geom. Topol.}, 11:473--515, 2007.

\bibitem[LM21]{Leary-Min}
Ian~J. Leary and Ashot Minasyan.
\newblock Commensurating {HNN} extensions: nonpositive curvature and
  biautomaticity.
\newblock {\em Geom. Topol.}, 25(4):1819--1860, 2021.

\bibitem[Mar91]{margulis_discrete_1991}
G.~A. Margulis.
\newblock {\em Discrete subgroups of semisimple {Lie} groups}, volume~17 of
  {\em Ergebnisse der {Mathematik} und ihrer {Grenzgebiete} (3) [{Results} in
  {Mathematics} and {Related} {Areas} (3)]}.
\newblock Springer-Verlag, Berlin, 1991.

\bibitem[Min17]{Min-cs_of_fibre_prods}
Ashot Minasyan.
\newblock On conjugacy separability of fibre products.
\newblock {\em Proc. Lond. Math. Soc. (3)}, 115(6):1170--1206, 2017.

\bibitem[Min21]{Min-virt_retr_props}
Ashot Minasyan.
\newblock Virtual retraction properties in groups.
\newblock {\em Int. Math. Res. Not. IMRN}, (17):13434--13477, 2021.

\bibitem[MM12]{Mart-Min}
Armando Martino and Ashot Minasyan.
\newblock Conjugacy in normal subgroups of hyperbolic groups.
\newblock {\em Forum Math.}, 24(5):889--910, 2012.

\bibitem[MM25]{minasyan_virtual_2025}
Jon Merladet and Ashot Minasyan.
\newblock Virtual retractions in free constructions, May 2025.
\newblock arXiv:2505.18054 [math].

\bibitem[Mos68]{mostow_quasi-conformal_1968}
G.~D. Mostow.
\newblock Quasi-conformal mappings in \$n\$-space and the rigidity of
  hyperbolic space forms.
\newblock {\em Publications Mathématiques de l'IHÉS}, 34:53--104, 1968.

\bibitem[MR03]{maclachlan_arithmetic_2003}
Colin Maclachlan and Alan~W. Reid.
\newblock {\em The arithmetic of hyperbolic 3-manifolds}, volume 219 of {\em
  Graduate {Texts} in {Mathematics}}.
\newblock Springer-Verlag, New York, 2003.

\bibitem[MSSW23]{margolis_graphically_2023}
Alex Margolis, Sam Shepherd, Emily Stark, and Daniel Woodhouse.
\newblock Graphically discrete groups and rigidity, March 2023.
\newblock arXiv:2303.04843 [math].

\bibitem[MZ16]{Min-Zal}
Ashot Minasyan and Pavel Zalesskii.
\newblock Virtually compact special hyperbolic groups are conjugacy separable.
\newblock {\em Comment. Math. Helv.}, 91(4):609--627, 2016.

\bibitem[Neu10]{neumann_leightons_2010}
Walter~D. Neumann.
\newblock On {Leighton}'s graph covering theorem.
\newblock {\em Groups, Geometry, and Dynamics}, 4(4):863--872, 2010.

\bibitem[Rat07]{rattaggi_three_2007}
Diego Rattaggi.
\newblock Three amalgams with remarkable normal subgroup structures.
\newblock {\em Journal of Pure and Applied Algebra}, 210(2):537--541, 2007.

\bibitem[Seg83]{Segal-book}
Daniel Segal.
\newblock {\em Polycyclic groups}, volume~82 of {\em Cambridge Tracts in
  Mathematics}.
\newblock Cambridge University Press, Cambridge, 1983.

\bibitem[Ser02]{serre_trees_2002}
Jean-Pierre Serre.
\newblock {\em Trees}.
\newblock Springer Science \& Business Media, November 2002.

\bibitem[She22]{shepherd_two_2022}
Sam Shepherd.
\newblock Two generalisations of {Leighton}’s theorem (with an appendix by
  {Giles} {Gardam} and {Daniel} {J}. {Woodhouse}).
\newblock {\em Groups, Geometry, and Dynamics}, 16(3):743--778, October 2022.

\bibitem[She24]{shepherd_commensurability_2024}
Sam Shepherd.
\newblock Commensurability of lattices in right-angled buildings.
\newblock {\em Advances in Mathematics}, 441:Paper No. 109522, 55, 2024.

\bibitem[Sta83]{stallings_topology_1983}
John~R. Stallings.
\newblock Topology of finite graphs.
\newblock {\em Inventiones mathematicae}, 71(3):551--565, March 1983.

\bibitem[SW22]{shepherd_quasi-isometric_2022}
Sam Shepherd and Daniel~J. Woodhouse.
\newblock Quasi-isometric rigidity for graphs of virtually free groups with
  two-ended edge groups.
\newblock {\em Journal für die reine und angewandte Mathematik (Crelles
  Journal)}, 2022(782):121--173, January 2022.
\newblock Publisher: De Gruyter Section: Journal für die reine und angewandte
  Mathematik.

\bibitem[TT23]{taam_quasi-isometric_2023}
Alexander Taam and Nicholas W.~M. Touikan.
\newblock On the quasi-isometric rigidity of graphs of surface groups, June
  2023.
\newblock arXiv:1904.10482 [math].

\bibitem[Tuk86]{tukia_quasiconformal_1986}
Pekka Tukia.
\newblock On quasiconformal groups.
\newblock {\em Journal d’Analyse Mathématique}, 46(1):318--346, December
  1986.

\bibitem[Tuk94]{tukia_convergence_1994}
Pekka Tukia.
\newblock Convergence groups and {Gromov}'s metric hyperbolic spaces.
\newblock {\em New Zealand Journal of Mathematics}, 23(2):157--187, 1994.

\bibitem[Wis96]{wise_non-positively_1996}
Daniel~T. Wise.
\newblock {\em Non-positively curved squared complexes: {Aperiodic} tilings and
  non-residually finite groups}.
\newblock ProQuest LLC, Ann Arbor, MI, 1996.

\bibitem[Woo21]{woodhouse_revisiting_2021}
Daniel~J. Woodhouse.
\newblock Revisiting {Leighton}’s theorem with the {Haar} measure.
\newblock {\em Mathematical Proceedings of the Cambridge Philosophical
  Society}, 170(3):615--623, May 2021.

\bibitem[Woo23]{woodhouse_leightons_2023}
Daniel~J. Woodhouse.
\newblock Leighton's theorem and regular cube complexes.
\newblock {\em Algebraic \& Geometric Topology}, 23(7):3395--3415, 2023.

\end{thebibliography}
\end{document}